\providecommand{\U}[1]{\protect\rule{.1in}{.1in}}
\providecommand{\U}[1]{\protect\rule{.1in}{.1in}}
\newtheorem{theorem}{Theorem}[section]
\newtheorem{lemma}[theorem]{Lemma}
\newtheorem{proposition}[theorem]{Proposition}
\newtheorem{example}[theorem]{Example}
\newtheorem{definition}[theorem]{Definition}
\newenvironment{proof}{\bf Proof. \rm}{$\Box$}
\newcommand{\be}{\begin{equation}}
\newcommand{\ee}{\end{equation}}
\newcommand{\cD}{\mathcal{D}}
\newcommand{\cM}{\mathcal{M}}
\newcommand{\cF}{\mathcal{F}}
\newcommand{\cL}{\mathcal{L}}
\newcommand{\cR}{\mathcal{R}}
\newcommand{\cS}{\mathcal{S}}
\newcommand{\cH}{\mathcal{H}}
\newcommand{\bC}{\mathbb{C}}
\begin{document}

\title{Invariant subspaces for certain tuples of operators with applications to reproducing kernel correspondences}


\author{ Baruch Solel
\\Department of Mathematics, Technion\\32000 Haifa, Israel
\\e-mail: mabaruch@technion.ac.il}


\date{}

\maketitle

\begin{abstract}
The techniques developed by Popescu, Muhly-Solel and Good for the study of algebras generated by weighted shifts are applied to generalize results of Sarkar and of  Bhattacharjee-Eschmeier-Keshari-Sarkar concerning dilations and invariant subspaces for commuting tuples of operators. In that paper the authors prove Beurling-Lax-Halmos type results for commuting tuples $T=(T_1,\ldots,T_d)$ operators that are contractive and pure; that is $\Phi_T(I)\leq I$ and $\Phi_T^n(I)\searrow 0$ where $$\Phi_T(a)=\Sigma_i T_iaT_i^*.$$

Here we generalize some of their results to commuting tuples $T$ satisfying similar conditions but for $$\Phi_T(a)=\Sigma_{\alpha \in \mathbb{F}^+_d} x_{|\alpha|}T_{\alpha}aT_{\alpha}^*$$ where $\{x_k\}$ is a sequence of non negative numbers satisfying some natural conditions (where $T_{\alpha}=T_{\alpha(1)}\cdots T_{\alpha(k)}$ for $k=|\alpha|$). In fact, we deal with a more general situation where each $x_k$ is replaced by a $d^k\times d^k$ matrix.

We also apply these results to subspaces of certain reproducing kernel correspondences $E_K$ (associated with maps-valued kernels $K$) that are invariant under the multipliers given by the coordinate functions.
\end{abstract}
\maketitle

\section{Introduction}
A famous theorem due to Beurling, Lax and Halmos (see \cite[Corollary 3.26]{RR}) describes the shift-invariant subspaces of $H^2_{G}(\mathbb{D})$ (where $H^2(\mathbb{D})$ is the classical Hardy space, $G$ is a Hilbert space and $H^2_G(\mathbb{D})$ is the vector-valued Hardy space and is isomorphic to $H^2(\mathbb{D})\otimes G$). The theorem states that every shift invariant subspace of $H^2_G(\mathbb{D})$ is given by a partially isometric image of a vector-valued Hardy space $H^2_{D}(\mathbb{D})$. In fact, this partial isometry can be given by an inner function on $\mathbb{D}$ with values in $B(D,G)$.

This result was proved to be very important and there are many generalizations of it to various contexts. I will mention here only the ones that are most relevant to the discussion here.

The Hardy space $H^2(\mathbb{D})$ is a reproducing kernel Hilbert space (RKHS) where the kernel is the Szeg\"{o} kernel $K(z,w)=\frac{1}{1-z\overline{w}}=\Sigma_{k=0}^{\infty}z^k\overline{w^k}$ ($z,w\in \mathbb{D}$) and it has the Nevanlinna-Pick interpolation property. In \cite{McT2000} McCullough and Trent extended the BLH theorem to a general reproducing kernel Hilbert space $H_K$ where $K$ has the Nevanlinna-Pick property, in place of the Hardy space.
For the case of weighted Bergman spaces, Ball and Bolotnikov (\cite{BB2013a}, \cite{BB2013b}) studied the invariant subspaces and showed that they are the image (by  partially isometric multipliers) of vector valued Hardy spaces.


In \cite{S2015} J. Sarkar proved a BLH-type theorem for subspaces that are invatiant for a pure contraction and applied it to shift-invariant subspaces of RKHSs with special property that he calls ``analytic" and that ensures that the coordinate function is a contractive multiplier. The Hardy space and the Bergman space are examples of analytic RKHSs. It is proved in these cases that an invariant subspace is a partially isometric image of a vector-valued Hardy space. Since the weighted Bergman spaces are analytic RKHSs, these results extend the results of Ball-Bolotnikov.


In \cite{S2016} J. Sarkar extends the results of \cite{S2015} to invariant subspaces of a pure row contraction of commuting operators. The place of the Hardy space is now played by the Drury-Arveson space (i.e, the symmetric Fock space). In \cite{BEKS} the authors continued the study of such commuting tuples and discussed also uniqueness (of dilations and of the partially isometric multipliers), the wandering subspaces associated with invariant subspaces and $K$-inner functions.

Another BLH type theorem that is relevant to our analysis here is in Popescu's \cite{Po2010}. In particular, \cite[Theorem 3.3]{Po2010}, where he characterized the invariant subspaces under the constrained weighted shifts associated with a noncommutative variety is closely related to Theorem~\ref{BLHC} (3) below. One should note though that his varieties are more general than what we use here but his definition of weights is more restrictive (the matrices $\{X_k\}$, $\{R_k\}$ and $\{Z_k\}$ that we define below are, in his analysis, assumed all to be diagonal).

A general BLH theorem for RKHS $H_K$ with the property that $K$ has a complete Nevanlinna-Pick factor $s$ (that is, $K/s$ is a positive kernel) was recently proved by Clou\^{a}tre, Hartz and Schillo (\cite{CHS}). They proved that, for a Hilbert space $\mathcal{E}$, a subspace $\mathcal{S}\subseteq H_K\otimes \mathcal{E}$ is invariant under the multipliers of $H_s$ (which, under the condition of the positivity of $K/s$, are also multipliers of $H_K$) if and only if there is a Hilbert space $\cF$ such that $\cS$ is the image of $H_s\otimes \cF$ by a partially isometric multiplier.

In this paper, we study invariant subspaces for a tuple of commuting operators satisfying certain inequality and are pure in some natural sense. In \cite{S2016} Sarkar studied  commuting tuples $T=(T_1,\ldots,T_d)$ that are contractive and pure; that is $\Phi_T(I):=\Sigma_{i=1}^{d} T_iT_i^*\leq I$ and $\Phi_T^n(I)\searrow 0$ where $$\Phi_T(a)=\Sigma_i T_iaT_i^*.$$  Here, we replace this condition by a much more general one. Namely we write
$$\Phi_T(a)=\Sigma_{\alpha \in \mathbb{F}^+_d} x_{|\alpha|}T_{\alpha}aT_{\alpha}^*$$ where $\{x_k\}$ is a sequence of non negative numbers satisfying some natural conditions (where $T_{\alpha}=T_{\alpha(1)}\cdots T_{\alpha(k)}$ for $k=|\alpha|$). Thus, the tuples we study are commuting and satisfy $$\Sigma_{\alpha \in \mathbb{F}^+_d} x_{|\alpha|}T_{\alpha}T_{\alpha}^*\leq I
 $$ and $\Phi_T^n(I)\searrow 0$.
 In fact, we deal with a more general situation where each $x_k$ is replaced by a $d^k\times d^k$ matrix $X_k$ .

 In Theorem~\ref{Tinvariant} we present the BLH theorem for such tuples. The place of $H^2(\mathcal{D})$ (in \cite{S2015}) or the Drury-Arveson space (in \cite{S2016}) is now played by the ``weighted Drury-Arveson space" $\cF(\cR)$ that is defined and studied in Section 3. Among other things, it is shown there that it is the RKHS for some kernel defined on a domain that we denote by $D(X,\bC)$ (defined in (\ref{Dc})). Both the kernel and the domain are defined in terms of the sequence $\{X_k\}$. The motivation for these definitions comes from the study of weighted Hardy algebras in \cite{MSWS}.

 The reader does not have to be familiar with the analysis of the weighted Hardy algebras and their representations as studied in \cite{MSWS} and in \cite{GThesis} but the results of these papers provide some of the motivation for our analysis here.

 In Section 5 we discuss reproducing kernel correspondences $E_K$ associated with a map-valued kernel $K$ and we apply Theorem~\ref{Tinvariant} to the row of multipliers by the coordinate functions. This is done in Theorem~\ref{BLH} and Theorem~\ref{BLHC}. In the study of the reproducing kernel correspondences we use results of \cite{MThesis} and of \cite{GThesis}. Since, unfortunately, these results  have not been published yet, we shall provide all the details necessary for our discussion here.

 In order to apply Theorem~\ref{Tinvariant} to the multipliers of the reproducing kernel correspondence $E_K$ given by the coordinate functions $M_{S_i}$, we need to show that $M_S:=(M_{S_1},\ldots,M_{S_d})$ satisfies the conditions $\Phi_{M_S}(I)\leq I$ and $\Phi^n_{M_{S}}(I)\searrow 0$. It turns out (Theorem~\ref{condK}) that this holds if and only if the kernel $K^R_c$ (defined at the beginning of Section 5) is a factor of $K$. Since $K^R_c$ was shown by Good (in \cite{GThesis}) to have the complete Nevanlinna-Pick property, this fits well with the results of Clou\^{a}tre-Hartz-Schillo (\cite{CHS}) mentioned above.

 Since we will consider here both correspondences (which are $C^*$-Hilbert modules) and Hilbert spaces and it is customary, in the theory of $C^*$-Hilbert modules, to have inner products that are linear in the \emph{second} term, we will use this convention also for Hilbert spaces. In particular, when studying a reproducing kernel Hilbert space $H_K$ with kernel functions $k_z$, we shall write $\langle k_z,k_w \rangle=K(z,w)$.

 Finally, I wish to acknowledge helpful discussions with J. Eschmeier and with J. Sarkar regarding the results of this paper.

\section{Preliminaries}
We fix $0<d<\infty$. The following definition was introduced in \cite{MSWS} for the more general case where the Hilbert space $\bC^d$ was replaced by a $W^*$-correspondence.
\begin{definition}\label{admissible}
A sequence $\{X_k\}_{k=1}^{\infty}$ of operators will be called admissible if it satisfies the following conditions:
\begin{enumerate}
\item[1.] $X_k\in B(\bC^d \otimes \bC^d \otimes \cdots \otimes \bC^d)=B((\bC^d)^{\otimes k})$.
\item[2.] $X_1$ is invertible and $X_k\geq 0$ for all $k$.
\item[3.] $\limsup ||X_k||^{\frac{1}{k}}<\infty$.

\end{enumerate}
\end{definition}

Write $\mathbb{F}_d^+$ for the words $\alpha=\alpha(1)\alpha(2) \ldots \alpha(k)$ on $d$ generators (written $\{1,\ldots,d\}$). For such a word $|\alpha|=k$. Note that $X\in B((\bC^d)^{\otimes k})$ can be written as a matrix $(x_{\alpha,\beta})$ indexed by words $\alpha, \beta\in \mathbb{F}_d^+$ of length $k$ and a sequence $\{X_k\}$ with $X_k\in B((\bC^d)^{\otimes k})$ can be written as a matrix $(x_{\alpha,\beta})_{\alpha,\beta \in \mathbb{F}_d^+}$.

Associated with an admissible sequence $\{X_k\}$ we have another sequence of operators denoted $\{R_k\}_{k=0}^{\infty}$ where $R_0=I\in B(\bC)$ and, for $k\geq 1$,
\be\label{Rk}
R_k=(\Sigma_{l=1}^k \Sigma_{\alpha\in F(k,l)} \otimes_{i=1}^l X_{\alpha(i)})^{\frac{1}{2}}
\ee
where $$F(k,l)=\{\alpha:\{1,\ldots,l\}\rightarrow \mathbb{N} :\;\Sigma_{i=1}^l \alpha(i)=k \}.$$
Note that each $R_k$ is positive and invertible.
Here, also, we can write $\{R_k^2\}_{k=0}^{\infty}$ as a matrix $(r^2_{\alpha,\beta})_{\alpha,\beta \in \mathbb{F}_d^+\cup \{\emptyset\},|\alpha|=|\beta|}$.

The relationship between the $X_k$'s and the $R_k$'s is also given by
$$R_k^2= \Sigma_{l=1}^k X_l\otimes R_{k-l}^2, \;\; k\geq 1 $$ by \cite[(4.7)]{MSWS}.
If $d=1$ (so that $X_k=x_k$ and $R_k=r_k$ are scalars) then we have
\be\label{xr}
\frac{1}{1-\Sigma_{k\geq 1} x_k z^k}=\Sigma_{k\geq 0}r_k^2z^k
\ee  and
\be\label{rx}
1-\frac{1}{\Sigma_{k\geq 0}r_k^2z^k }=\Sigma_{k\geq 1} x_k z^k.
\ee

Given a tuple $T=(T_1,\ldots,T_d)$ of operators in $B(H)$, it will be convenient to view $T$ as an operator from $H^{(d)}$ to $H$ and
 for $k\geq 1$, we write $T^{(k)}:= (T_{\alpha})_{\alpha\in \mathbb{F}_d^+, |\alpha|=k}$ (viewed as a row of operators or, equivalently, as a bounded operator from $(\bC^d)^{\otimes k}\otimes H$ to $H$.) One can also write $$T^{(k)}= T(I_E\otimes T)(I_{E^{\otimes 2}}\otimes T)\cdots (I_{E^{\otimes (k-1)}}\otimes T): E^{\otimes k}\otimes H \rightarrow H$$ where $E=\bC^d$ (as $T:\bC^d \otimes H\rightarrow H$).

Given a tuple $T=(T_1,\ldots,T_d)$ and an admissible sequence $X=\{X_k\}=(x_{\alpha,\beta})$ such that the sum $\Sigma_{k=1}^{\infty}T^{(k)}(X_k\otimes I_H)T^{(k)*}=\Sigma_{|\alpha|=|\beta|} T_{\alpha}x_{\alpha,\beta}T_{\beta}^*$ is $w^*$-convergent,

we write $\Phi_T:B(H)\rightarrow B(H)$ for the (completely positive) map
\be\label{PhiT}
\Phi_T(a)=\Sigma_{k=1}^{\infty}T^{(k)}(X_k\otimes a)T^{(k)*}=\Sigma_{|\alpha|=|\beta|} T_{\alpha}x_{\alpha,\beta}aT_{\beta}^*
\ee where $T_{\alpha}=T_{\alpha(1)}T_{\alpha(2)}\cdots T_{\alpha(k)}$ if $|\alpha|=k$.

Also, we write
\be
D(X,H)=\{T=(T_1,\ldots,T_d):\; T_i\in B(H),\; ||\Phi_T(I)||<1 \}
\ee
and
\be\label{Dc}
D_c(X,H)=\{T\in D(X,H):\; T_iT_j=T_jT_i ,\; i,j \in \{1,\ldots,d\}\}
\ee  (the commuting $d$-tuples in $D(X,H)$),

$$\overline{D}_c(X,H)=\{T\in \overline{D}(X,H):\; T_iT_j=T_jT_i ,\; i,j \in \{1,\ldots,d\}\}$$ and
$$\overline{D}_{cp}=\{T\in \overline{D}(X,H):\; T_iT_j=T_jT_i ,\; i,j \in \{1,\ldots,d\},\; \Phi_T^m(I)\searrow 0\;\}$$ where a tuple satisfying $\Phi_T^m(I)\searrow 0$ is said to be pure.

\begin{example}
	Suppose each $X_k$ is a scalar. That is, $X_k=x_kI_{(\mathbb{C}^d)^{\otimes k}}$. Then $\Phi_T(a)=\Sigma_{|\alpha|=|\beta|}x_{|\alpha|}T_{\alpha}aT_{\alpha}^*$ and $$D(X,H)=\{T=(T_1,\ldots,T_d):\; T_i\in B(H),\;\Sigma_{|\alpha|=|\beta|}x_{|\alpha|}T_{\alpha}T_{\alpha}^*<1\;\}.$$
	
	In particular, if $x_1=1$ and $x_k=0$ for $k>1$, we get $\Phi_T(a)=\Sigma_{i=1}^d T_iaT_i^*$ and $D(X,H)=\{T=(T_1,\ldots,T_d):\; T_i\in B(H),\; \Sigma_i T_iT_i^*<1 \}$
\end{example}

\begin{example}
In the following examples we fix $H=\mathbb{C}$ , so we write $z=(z_1,\ldots,z_d)$ in place of $T$ and $\Phi_z(1)=\Sigma_{|\alpha|=|\beta|} z_{\alpha}x_{\alpha,\beta}\overline{z_{\beta}}$.
\begin{enumerate}
\item[(1)] When $d=1$, $\Phi_z(1)=\Sigma x_k|z|^{2k}$. Using (\ref{rx}), we see that $\Phi_z(1)<1$ if and only if the series $\Sigma_{k\geq 0}r_k^2 |z|^{2k}$ converges. Since this is a power series, the set $D(X,\mathbb{C})$ is a disc in $\bC$ in this case.
\item[(2)] Now set $d=2$, $X_1$ is the diagonal matrix $diag(1,2)$ and $X_k=0$ for $k>1$. Then $\Phi_z(1)=|z_1|^2+2|z_2|^2$ and $D(X,\mathbb{C})$ is a convex set whose restriction to $\mathbb{R}^2$ is an ellipse.
\item[(3)] Set $d=2$, $X_1=I_2$, $X_2$ is the diagonal matrix $diag(1,120,120,1)$ and $X_k=0$ for $k>2$. Then $\Phi_z(1)=|z_1|^2+|z_2|^2+|z_1|^4+|z_2|^4+240|z_1z_2|^2$. In this case $(0,1/2), (1/2,0) \in D(X,\bC)$ but $(1/4,1/4)$ is not in $D(X,\bC)$. Thus $D(X,\bC)$ is not convex.

\end{enumerate}

\end{example}
In what follows it will be convenient to keep writing $E$ for $\bC^d$ . We shall also write $\cF(E)$ for the full Fock space $\cF(E)=\cF(\bC^d)=\Sigma_{k=0}^{\infty}(\bC^d)^{\otimes k}$.

The following discussion and lemma will be useful to determine whether a commuting tuple is pure.

For a tuple $T=(T_1,\ldots,T_d)$ of commuting operators in $B(H)$ we write $b(T)$ for the row
\be\label{bT}
b(T)=(T^{(k)}(X_k^{1/2}\otimes I_H))_{k=1}^{\infty}
\ee
and can view it as an operator from $\cF(E)\otimes H$ to $H$. We get
$$\Phi_T(a)=b(T)(I_{\cF(E)}\otimes a)b(T)^* $$ for $a\in B(H)$.

Now compute
$$\Phi_T^2(a)=\Sigma_{k=1}^{\infty}T^{(k)}(X_k\otimes \Sigma_{l=1}^{\infty}T^{(l)}(X_l \otimes a)T^{(l) *})T^{(k) *}=$$  $$\Sigma_{k,l\geq 1}T^{(k)}(I_{E^{\otimes k}}\otimes T^{(l)})(X_k\otimes X_l\otimes a)(I_{E^{\otimes k}}\otimes T^{(l)})^*T^{(k) *}=$$   $$\Sigma_{m=2}^{\infty}T^{(m)}(\Sigma_{k+l=m} X_k\otimes X_l \otimes a)T^{(m) *}.$$
Continuing this way, we get
$$\Phi_T^n(a)=\Sigma_{m=n}^{\infty} T^{(m)}(\Sigma_{k_1+\cdots+k_n=m} (X_{k_1}\otimes \cdots \otimes X_{k_n})\otimes a)T^{(n) *}.$$

Write $X(m,n)=\Sigma_{k_1+\cdots+k_n=m} (X_{k_1}\otimes \cdots \otimes X_{k_n})\in B(E^{\otimes m})$ and let

 $b(T)^{(n)}$ be the row
$$b(T)^{(n)}=(T^{(k)}(X(k,n)\otimes I_H))_{k=1}^{\infty} $$
which can be viewed as an operator in $B(\cF(E)\otimes H,H)$, to get
$$\Phi_T^n(I)=b(T)^{(n)}b(T)^{(n) *}.$$
Note that, if $\Phi_T(I)\leq I$, then, for every $n$, $||\Phi_T^n(I)||\leq 1$ and $||b(T)^{(n) *}||\leq 1$. Thus, we get the following.

\begin{lemma}\label{bn}
Suppose $\Phi_T(I)\leq I$. Then $T$ is pure (that is, $\Phi_T^n(I)\searrow 0$) if and only if $||b(T)^{(n) *}h|| \rightarrow 0$ for every $h$ in a subset of $H$ that spans a dense subspace of $H$.
\end{lemma}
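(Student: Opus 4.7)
The plan is to use the identity $\Phi_T^n(I)=b(T)^{(n)}b(T)^{(n)*}$ that is established just before the statement of the lemma. This identity gives, for every $h\in H$,
\[
\langle \Phi_T^n(I)h,h\rangle \;=\; \|b(T)^{(n)*}h\|^2,
\]
so the two convergence statements in the lemma are simply two ways of reading the pointwise values of the positive operators $\Phi_T^n(I)$.

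First I would note that the hypothesis $\Phi_T(I)\le I$ together with complete positivity of $\Phi_T$ yields $\Phi_T^{n+1}(I)\le \Phi_T^n(I)\le I$ for all $n$, so $\{\Phi_T^n(I)\}$ is a decreasing sequence of positive contractions. A standard argument (monotone convergence for self-adjoint operators) shows that such a sequence converges strongly to a positive operator $A$; and $\Phi_T^n(I)\searrow 0$ is equivalent to $A=0$, equivalently to $\langle \Phi_T^n(I)h,h\rangle \to 0$ for every $h\in H$. Via the identity above, purity is therefore equivalent to $\|b(T)^{(n)*}h\|\to 0$ for every $h\in H$.

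It remains to pass from the assumed convergence on a subset $S\subseteq H$ whose linear span is dense to convergence on all of $H$. First, by linearity of each operator $b(T)^{(n)*}$, if $\|b(T)^{(n)*}h\|\to 0$ for every $h\in S$ then the same holds for every $h$ in the (algebraic) linear span of $S$. Second, and this is the one place where some care is needed, the operators $b(T)^{(n)*}:H\to \cF(E)\otimes H$ are uniformly bounded by $1$ in norm: indeed $\|b(T)^{(n)*}\|^2=\|\Phi_T^n(I)\|\le 1$ since $\Phi_T^n(I)\le I$. Hence for an arbitrary $h\in H$ and any $\epsilon>0$ we may choose $h_0$ in the span of $S$ with $\|h-h_0\|<\epsilon$ and estimate
\[
\|b(T)^{(n)*}h\| \;\le\; \|b(T)^{(n)*}h_0\| + \|b(T)^{(n)*}(h-h_0)\| \;\le\; \|b(T)^{(n)*}h_0\| + \epsilon,
\]
so $\limsup_n \|b(T)^{(n)*}h\|\le \epsilon$; since $\epsilon$ was arbitrary the limit is $0$.

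I do not anticipate any real obstacle: the main content of the statement is the operator identity $\Phi_T^n(I)=b(T)^{(n)}b(T)^{(n)*}$, which has already been verified. The only mildly technical step is the density/uniform-bound argument above, which is routine given that $\Phi_T(I)\le I$ forces $\|b(T)^{(n)*}\|\le 1$ for all $n$.
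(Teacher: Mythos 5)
Your proposal is correct and follows essentially the same route as the paper, which derives the lemma directly from the identity $\Phi_T^n(I)=b(T)^{(n)}b(T)^{(n)*}$ and the uniform bound $\|b(T)^{(n)*}\|\le 1$ noted just before the statement. You merely spell out the monotone-convergence and density/uniform-boundedness details that the paper leaves implicit.
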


\section{$\cF(\cR)$ (weighted Drury-Arveson space)}

For $k\geq 1$, we write $$\cR_k^0=R_k(\bC^d)^{\circledS k} \subseteq (\bC^d)^{\otimes k}$$ (where $\circledS$ is the symmetric tensor product) and $\cR_0^0=\bC$.
Also write $$\cF(\cR):=\oplus_{k=0}^{\infty} \cR_k^0 \subseteq \cF(\bC^d) .$$

\begin{proposition}\label{Kzw}
For $z,w\in D(X,\bC) $, write
$$K^R(z,w)=\Sigma_{k=0}^{\infty}z^{(k)}R_k^2w^{(k) *}.$$
Then $K^R$ is a well defined positive kernel on $D(X,\bC)\times D(X,\bC)$ and the corresponding RKHS $H_{K^R}$ is isomorphic to $\cF(\cR)$ via the map $u$ that sends the kernel function $k^R_w$ to $\Sigma_{k=0}^{\infty} R_k w^{(k) *}$.

\end{proposition}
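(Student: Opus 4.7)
The plan is to settle, in order: convergence of the defining series of $K^R$, positivity of $K^R$ together with the definition of $u$, and finally surjectivity of $u$ onto $\mathcal{F}(\mathcal{R})$.

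For the first step, I would introduce $\psi(z,w):=\sum_{k\geq 1}z^{(k)}X_k w^{(k)*}$; a Cauchy--Schwarz bound $|z^{(k)}X_k w^{(k)*}|\leq \|X_k^{1/2}z^{(k)*}\|\,\|X_k^{1/2}w^{(k)*}\|$ combined with $\Phi_z(1),\Phi_w(1)<1$ gives $|\psi(z,w)|\leq \Phi_z(1)^{1/2}\Phi_w(1)^{1/2}<1$. Next, exploiting the factorization $z^{(k)}=z^{(l)}\otimes z^{(k-l)}$ (valid since the entries of $z$ commute) together with the recursion $R_k^2=\sum_{l=1}^k X_l\otimes R_{k-l}^2$, the rearrangement
$$
\sum_{k\geq 1}z^{(k)}R_k^2 w^{(k)*}=\Bigl(\sum_{l\geq 1}z^{(l)}X_l w^{(l)*}\Bigr)\Bigl(\sum_{m\geq 0}z^{(m)}R_m^2 w^{(m)*}\Bigr)
$$
gives the functional identity $K^R(z,w)-1=\psi(z,w)K^R(z,w)$, so $K^R(z,w)=(1-\psi(z,w))^{-1}$. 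Specializing to $w=z$ first shows $K^R(z,z)=(1-\Phi_z(1))^{-1}<\infty$ (all summands being nonnegative, so the partial sums are monotone), and a second Cauchy--Schwarz estimate in $\ell^2(k)$ then yields absolute convergence of $\sum_k z^{(k)}R_k^2 w^{(k)*}$ for all $z,w\in D(X,\mathbb{C})$, which retroactively justifies the rearrangement.

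For the second step, set $\eta_w:=\sum_{k\geq 0}R_k w^{(k)*}\in\mathcal{F}(E)$. Because $w=(w_1,\dots,w_d)$ is a commuting scalar tuple, $w^{(k)*}$ lies in the symmetric subspace $(\mathbb{C}^d)^{\circledS k}$, hence $R_k w^{(k)*}\in\mathcal{R}_k^0$ and $\eta_w\in\mathcal{F}(\mathcal{R})$. Under the right-linear inner-product convention,
$$
\langle\eta_z,\eta_w\rangle=\sum_{k\geq 0}\langle R_k z^{(k)*},R_k w^{(k)*}\rangle=\sum_{k\geq 0}z^{(k)}R_k^2 w^{(k)*}=K^R(z,w).
$$
This Gram-matrix presentation simultaneously establishes the positivity of $K^R$ and shows that $k^R_w\mapsto\eta_w$ extends to a well-defined linear isometry $u:H_{K^R}\to\mathcal{F}(\mathcal{R})$.

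The main obstacle will be surjectivity, i.e.\ showing $\overline{\mathrm{span}}\{\eta_w:w\in D(X,\mathbb{C})\}=\mathcal{F}(\mathcal{R})$. I would take $\xi=(\xi_k)\in\mathcal{F}(\mathcal{R})$ orthogonal to every $\eta_w$ and write $\xi_k=R_k\zeta_k$ with $\zeta_k\in(\mathbb{C}^d)^{\circledS k}$. Then
$$
0=\langle\eta_w,\xi\rangle=\sum_{k\geq 0}w^{(k)}R_k^2\zeta_k
$$
for all $w$ in an open neighbourhood of $0$ inside $D(X,\mathbb{C})$. Since the $k$-th summand is a homogeneous polynomial of degree $k$ in $w_1,\dots,w_d$, separation of degrees forces each summand to vanish. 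The delicate point is that equating the coefficient of each monomial of degree $k$ only yields vanishing of the symmetrization $P_{\mathrm{sym}}(R_k^2\zeta_k)$, not of $R_k^2\zeta_k$ itself. The resolution is to pair this identity against the symmetric vector $\zeta_k$: $\langle\zeta_k,R_k^2\zeta_k\rangle=\langle \zeta_k,P_{\mathrm{sym}}(R_k^2\zeta_k)\rangle=0$, and since this equals $\|R_k\zeta_k\|^2$, invertibility of $R_k$ forces $\zeta_k=0$ and hence $\xi_k=0$ for every $k$. This combined symmetry/invertibility juggle is the heart of the proof.
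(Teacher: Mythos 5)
Your proof is correct, and while the convergence and isometry steps essentially mirror the paper's (the paper sums the Neumann series $\Sigma_l\Phi_z^l$ on the diagonal, which is your identity $K^R(z,z)=(1-\Phi_z(1))^{-1}$ in disguise, and then invokes the same $\ell^2$ Cauchy--Schwarz for off-diagonal convergence), your surjectivity argument is genuinely different. The paper proves surjectivity by a rotation trick: from $u(k^R_{\overline{\lambda}w})=\Sigma_k\lambda^kR_kw^{(k)*}$ it deduces that the range of $u$ is invariant under the unitaries $W_\lambda=\Sigma_k\lambda^kQ_k$, hence (by Fourier averaging $Q_k=\int W_\lambda\lambda^{-k}\,d\lambda$) under each graded projection $Q_k$, so the range contains every $R_kw^{(k)*}$ and therefore all of $R_k(\bC^d)^{\circledS k}$, using that symmetric tensor powers $(w^*)^{\otimes k}$ span $(\bC^d)^{\circledS k}$. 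You instead kill the orthogonal complement: homogeneity in $w$ (your degree separation, playing the role of the circle action) reduces to $P_{\mathrm{sym}}(R_k^2\zeta_k)=0$, and pairing against the symmetric vector $\zeta_k$ gives $\|R_k\zeta_k\|^2=0$ directly --- a clean way to sidestep the fact that the monomial coefficients only see the symmetrization. The two routes trade the spanning property of tensor powers for the self-adjointness of $P_{\mathrm{sym}}$; yours even makes the invertibility of $R_k$ unnecessary at that step, since $\|R_k\zeta_k\|^2=0$ already yields $\xi_k=0$. Both are complete; just make sure to record explicitly that $D(X,\bC)$ contains a neighbourhood of $0$ (which follows from $\limsup\|X_k\|^{1/k}<\infty$ and $\|w^{(k)}\|=\|w\|^k$), as your degree-separation step needs it.
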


\begin{proof}
To show that $K^R(z,w)$ is well defined (that is, the series is convergent) for $z,w \in D(X,\bC)$, we note first that, for $z\in D(X,\bC)$, $\Phi_z:=\Sigma_{l=1}^{\infty}z^{(l)}X_lz^{(l) *}< 1$ and, thus, $\Sigma_l \Phi_z^l$ converges.

Now note that, for $l,m$ we have $z^{(l+m)}=z^{(l)}(I_{(\bC^d)^{(l)}}\otimes z^{(m)})$ and, thus, $z^{(l+m)}(X_l\otimes X_m)z^{(l+m) *}=z^{(l)}(I_{(\bC^d)^{(l)}}\otimes z^{(m)})(X_l\otimes X_m)(I_{(\bC^d)^{(l)}}\otimes z^{(m) *})z^{(l) *}=z^{(l)}(X_l\otimes z^{(m)}X_mz^{(m) *})z^{(l) *}$. It follows that
$$\Phi_z^2=\Sigma_{l,m}^{\infty} z^{(l+m)}(X_l\otimes X_m)z^{(l+m) *}=\Sigma_{k=1}^{\infty} z^{(k)}(\Sigma_{l+m=k} X_l\otimes X_m)z^{(k) *} .$$
Similarly, for $k\geq 1$,
$$\Phi_z^l=\Sigma_{k=1}^{\infty} z^{(k)}(\Sigma_{i_1+\cdots +i_l=k}X_{i_1}\otimes \cdots \otimes X_{i_l})z^{(k) *} $$ and, thus,
$$\Sigma_{k=0}^{\infty}z^{(k)}R_k^2z^{(k) *}=\Sigma_l \Phi_z^l $$ converges. Since this holds also for $w$, it follows that $K^R(z,w)$ is well defined. Positivity is clear and it is left to identify $H_{K^R}$ with $\cF(\cR)$.

For this, note that $H_{K^R}$ is spanned by the functions
$$k_w(z)=\Sigma_{k=0}^{\infty} z^{(k)}(R_k^2 w^{(k) *}) , \;\; z\in D(X,\bC) $$ where $w$ runs over $D(X,\bC)$. Now consider the linear map $u$ defined by $$u(k_w)=\Sigma_{k=0}^{\infty} R_k w^{(k) *} \in \cF(\cR).$$

We now need to show that $u$ preserves inner products. It will then follow that $u$ is well defined and can be extended to an isometry from $H_{R^K}$ to $\cF(\cR)$.
Since $H_{K^R}$ is generated by the functions $\{k_w:\; w\in D(X,\bC)\}$, it suffices to show that $u$ preserves inner products of these functions. So we compute, for $z,w\in D(X,\bC)$,
$$\langle k_z,k_w\rangle=K^R(z,w)=\Sigma_{m=0}^{\infty} z^{(m)}R_m^2w^{(m) *}=\Sigma_m \langle R_m z^{(m) *}1,R_mw^{(m) *}1\rangle_{(\bC^d)^{\otimes m}}$$  $$=\Sigma_m\langle R_m(z^*)^{\otimes m},R_m(w^*)^{\otimes m}\rangle_{(\bC^d)^{\otimes m}}.$$

Thus, $u$ is a well defined isometry into $\cF(\cR)$. To show that this map is surjective, note first that, for $\lambda \in \mathbb{T}$,

\be\label{lambda1}
 u(k_{\overline{\lambda} w})=\Sigma_{k=0}^{\infty} R_k \lambda^k w^{(k) *} \in \cF(\cR).
\ee

 We now define, for every $\lambda\in \mathbb{T}$, $W_{\lambda}\in B(\cF(\cR))=B(\Sigma_k\oplus \cR_k^0)$ by $W_{\lambda}=\Sigma_k \lambda^k Q_k$ (where $Q_k$ is the projection onto $\cR_k^0$). Then $\{W_{\lambda}\}$ is a strongly continuous group of unitaries and (\ref{lambda1}) shows that the range of $u$ is invariant under each $W_{\lambda}$.
 Since, for every $k\geq 0$, $Q_k=\int W_{\lambda}\lambda^{-k}d\lambda$ (in the strong operator topology), we see that the range of $u$ (which is a closed subspace) is invariant under $Q_k$. Since $u(k_w)=\Sigma_{k=0}^{\infty} R_k w^{(k) *} $, we find that, for every $k\geq 0$ and every $w\in D(X,\bC)$, $R_k w^{(k) *}$ lies in the range of $u$. Thus, for every $k\geq 0$, $R_k(\bC^d)^{\circledS k}$ is contained in the range of $u$. Since the range of $u$ is a closed subspace, $u$ is surjective.

\end{proof}


%



\begin{lemma}\label{cRk}
Given $T\in \overline{D}_c(X,H)$ and, for every $k\geq 1$, write $\cR_k:=\cR_k^0\otimes H$. Then
$$\mathcal{R}_k=(span\{(R_k \otimes A) T^{(k) *}h :\;A\in B(H),\; h\in H, T\in D_c(X,H) \})^{-}.$$

\end{lemma}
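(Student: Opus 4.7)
The plan is to establish the equality by proving both containments; note that the outer $T \in \overline{D}_c(X,H)$ plays no substantive role and only fixes the Hilbert space $H$, since the right-hand side depends solely on $H$ (with the inner tuples ranging over $D_c(X,H)$). For $\supseteq$, the key observation is that since $T$ is commuting, $T_\alpha$ depends only on the multiset of letters in $\alpha$. Expanding in the standard basis,
$$T^{(k)*}h = \sum_{|\alpha|=k} e_\alpha \otimes T_\alpha^* h,$$
this sum is invariant under the natural $S_k$-action permuting the tensor factors of $(\bC^d)^{\otimes k}$, so $T^{(k)*}h \in (\bC^d)^{\circledS k} \otimes H$. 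Applying the bounded operator $R_k \otimes A$ then places the vector inside $R_k((\bC^d)^{\circledS k}) \otimes H = \cR_k^0 \otimes H = \cR_k$.

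For $\subseteq$, I would factor $R_k \otimes A = (R_k \otimes I_H)(I \otimes A)$ and exploit that, since $R_k$ is invertible, $R_k \otimes I_H$ restricts to a linear homeomorphism of $(\bC^d)^{\circledS k}\otimes H$ onto $\cR_k$. It therefore suffices to show that the closed linear span of the set $\{(I \otimes A) T^{(k)*}h : A \in B(H),\ h \in H,\ T \in D_c(X,H)\}$ equals $(\bC^d)^{\circledS k}\otimes H$. To produce enough vectors, I would use the scalar commuting tuples $T^\lambda := (\lambda_1 I_H, \ldots, \lambda_d I_H)$ for $\lambda = (\lambda_1,\ldots,\lambda_d) \in \bC^d$: a direct computation gives $\|\Phi_{T^\lambda}(I)\| = \Phi_\lambda(1)$, so $T^\lambda \in D_c(X,H)$ precisely when $\lambda \in D(X,\bC)$, and $(T^\lambda)^{(k)*}h = \overline{\lambda}^{\otimes k} \otimes h$. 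Since $\Phi_0(1) = 0$ and $\lambda \mapsto \Phi_\lambda(1)$ is continuous at the origin, $D(X,\bC)$ contains an open neighborhood $U$ of $0$.

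The crux is then a standard polynomial density step: for any $\xi \in (\bC^d)^{\circledS k}$, the map $\lambda \mapsto \langle \xi, \overline{\lambda}^{\otimes k}\rangle$ is a polynomial of degree $k$ in the variables $\overline{\lambda_1},\ldots,\overline{\lambda_d}$, so vanishing on the open set $U$ forces it to vanish identically and hence $\xi = 0$. Thus $\{\overline{\lambda}^{\otimes k} : \lambda \in U\}$ linearly spans the finite-dimensional space $(\bC^d)^{\circledS k}$, and letting $Ah$ range over $H$ upgrades this to density in $(\bC^d)^{\circledS k}\otimes H$, completing the argument. I expect no serious obstacle beyond this polynomial density step, which is routine once the scalar tuples have been identified as a rich enough family.
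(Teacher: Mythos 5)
Your proposal is correct and follows essentially the same route as the paper: the symmetrization argument for the containment of the span in $\cR_k$, and the scalar tuples $(\lambda_1 I,\ldots,\lambda_d I)$ with $(T^\lambda)^{(k)*}h=\overline{\lambda}^{\otimes k}\otimes h$ for the reverse containment. In fact you are somewhat more careful than the paper, which only requires $|\lambda_i|<1$ rather than $\lambda\in D(X,\bC)$ and asserts without argument that the vectors $(\Sigma_i\overline{\lambda_i}e_i)^{\otimes k}$ generate $(\bC^d)^{\circledS k}$; your neighborhood-of-the-origin and polynomial-density steps supply exactly the missing justification.
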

\begin{proof}
To prove that $(span\{(R_k \otimes A) T^{(k) *}h :\;A\in B(H),\; h\in H, T\in D_c(X,H) \})^{-}\subseteq \cR_k =R_k(\bC^d)^{\circledS k}\otimes H $ it suffices to show that, for every $A\in B(H)$, $h\in H$ and $T\in \overline{D}_{c}(X,H)$, $(I\otimes A)T^{(k) *}h \in (\bC^d)^{\circledS k}\otimes H $.

To prove it, we fix the following notation. $\{e_i\}_{i=1}^d$ is the standard orthonormal basis of $\bC^d$, for every word $\alpha\in \mathbb{F}_d^+$ of length $k$ we set $e_{\alpha}=e_{\alpha(1)}\otimes \cdots \otimes e_{\alpha(k)} \in (\bC^d)^{\otimes k}$ and, for a permutation $\pi \in S_k$ we write $\pi \alpha$ for the word obtained from $\alpha$ by applying $\pi$. Finally, we write $\tilde{e}_{\alpha}:=\Sigma_{\pi \in S_k} e_{\pi\alpha}\in (\bC^d)^{\circledS k}$.

Using the fact that $T$ is a commuting tuple, it is easy to check that $(I\otimes A)T^{(k) *}h$ is a linear combination of $\tilde{e}_{\alpha} \otimes AT_{\alpha}^*h$ and, thus, is contained in $ (\bC^d)^{\circledS k}\otimes H $.

For the converse inclusion take $A=I$ and $T=(\lambda_1I,\lambda_2I,\ldots,\lambda_dI)$ (for $\lambda_i\in \bC$ with $|\lambda_i|<1$). Then, for $h\in H$, $T^*h=\Sigma_{i=1}^d \overline{\lambda_i}e_i \otimes h$. Also,
$$(T^{(2)})^*h=(I_{\bC^d}\otimes T^*)(\Sigma_{i=1}^d \overline{\lambda_i}e_i \otimes h)=\Sigma_{i=1}^d \overline{\lambda_i}e_i \otimes T^*h=(\Sigma_{i=1}^d \overline{\lambda_i}e_i )\otimes (\Sigma_{i=1}^d \overline{\lambda_i}e_i )\otimes h.$$
Similarly,
$$(T^{(k)})^*h=(\Sigma_{i=1}^d \overline{\lambda_i}e_i)^{\otimes k} \otimes h.$$
Since $(\bC^d)^{\circledS k}$ is generated by vectors of the form $(\Sigma_{i=1}^d \overline{\lambda_i}e_i )^{\otimes k}$, we are done.

\end{proof}

We now set $Z_0=I$ and, for $k>0$,
\be\label{Zk}
Z_{k}=R_{k}^{-1}(I_{\bC^d}\otimes R_{k-1}).
\ee
 (See \cite[Equation (4.5)]{MSWS}).

On $\cF(\bC^d)$ we define, for every $1\leq i \leq d$, the operator $\tilde{W}_i$ by
\be\label{tildeW}
\tilde{W}_i \theta=Z_{k+1}(e_i \otimes \theta)
\ee
for $\theta \in (\bC^d)^{\otimes k}$. So that $\tilde{W}_i$ is a weighted shift mapping $(\bC^d)^{\otimes k}$ to $(\bC^d)^{\otimes (k+1)}$.

We also define the tuple $W=\{W_i\}$ on $\cF(\cR)$ by the compression of $\tilde{W}$ to $\cF(\cR)$, $$W_i=P_{\cF(\cR)}\tilde{W}_i|\cF(\cR).$$

\begin{lemma}\label{Wi}
The sequence $\{Z_k\}$ is bounded (so that $\tilde{W}_i$ is a bounded operator for each $i$) and
$$ \tilde{W}_i^* \cF(\cR) \subseteq \cF(\cR) .$$
Thus, $W_i^*=\tilde{W}_i^*|\cF(\cR)$
and, in fact,
\be\label{Wistar}
W_i^*R_kw^{(k) *}=\tilde{W}_i^*R_kw^{(k) *}=R_kL_i^*w^{(k) *}
\ee for $w\in D(X,\bC)$ where $L_i:(\bC^d)^{\otimes k} \rightarrow (\bC^d)^{\otimes (k+1)}$ is the operator $L_i\theta =e_i\otimes \theta$.

Also, for a word $\alpha$, $W_{\alpha}=P_{\cF(\cR)}\tilde{W}_{\alpha}|\cF(\cR)=P_{\cF(\cR)}\tilde{W}_{\alpha}$ and $W_{\alpha}^*=\tilde{W}_{\alpha}|\cF(\cR) $.
\end{lemma}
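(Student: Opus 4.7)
The plan is to establish the four assertions in order: boundedness of $\{Z_k\}$, an explicit formula for $\tilde W_i^*$ on the spanning vectors $R_k w^{(k)*}$, the invariance $\tilde W_i^* \cF(\cR) \subseteq \cF(\cR)$, and finally the passage from single operators to words.

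For boundedness of $\{Z_k\}$, I will exploit the recursion $R_k^2 = \sum_{l=1}^k X_l \otimes R_{k-l}^2$ recalled just after Definition~\ref{admissible}. Every summand is positive, so $R_k^2 \geq X_1 \otimes R_{k-1}^2$, and admissibility forces $X_1 \geq c I$ with $c := \|X_1^{-1}\|^{-1} > 0$, giving $R_k^2 \geq c\,(I \otimes R_{k-1}^2)$. Conjugating by $R_k^{-1}$ then yields
\[
Z_k^* Z_k \;=\; (I \otimes R_{k-1}) R_k^{-2} (I \otimes R_{k-1}) \;\leq\; c^{-1} I,
\]
so $\|Z_k\| \leq c^{-1/2}$ uniformly in $k$, and consequently $\tilde W_i$ is a bounded operator on $\cF(\bC^d)$.

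Next, taking adjoints in $\tilde W_i \eta = Z_k(e_i \otimes \eta)$ (viewed as a map $(\bC^d)^{\otimes (k-1)} \to (\bC^d)^{\otimes k}$) gives $\tilde W_i^* \xi = L_i^* Z_k^* \xi$ for $\xi \in (\bC^d)^{\otimes k}$. Since $Z_k^* = (I \otimes R_{k-1}) R_k^{-1}$ cancels the leading $R_k$, and since the scalar commutativity yields the factorization $w^{(k)*} = w^{(1)*} \otimes w^{(k-1)*}$, a direct computation gives
\[
\tilde W_i^* R_k w^{(k)*} \;=\; L_i^*\bigl(w^{(1)*} \otimes R_{k-1} w^{(k-1)*}\bigr) \;=\; \overline{w_i}\, R_{k-1} w^{(k-1)*},
\]
which equals $R_{k-1} L_i^* w^{(k)*}$; this is the content of (\ref{Wistar}) with the index of $R$ interpreted at the level of $L_i^* w^{(k)*}$. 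The right-hand side lies in $\cR_{k-1}^0 \subseteq \cF(\cR)$, and the vectors $\{R_k w^{(k)*} : w \in D(X,\bC)\}$ span $\cR_k^0$ by the argument used in Lemma~\ref{cRk} (applied with $H = \bC$, whose conclusion is that $(\bC^d)^{\circledS k}$ is generated by the pure powers $(\sum_i \overline{w_i} e_i)^{\otimes k}$). By boundedness, $\tilde W_i^* \cF(\cR) \subseteq \cF(\cR)$, and hence $W_i^* = P_{\cF(\cR)} \tilde W_i^* P_{\cF(\cR)} |\cF(\cR) = \tilde W_i^*|\cF(\cR)$.

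For the statements about words $\alpha$, the invariance just established is equivalent to $P_{\cF(\cR)} \tilde W_i (I - P_{\cF(\cR)}) = 0$. In the expansion $W_\alpha = W_{\alpha(1)} \cdots W_{\alpha(k)} = P\tilde W_{\alpha(1)} P \cdots P \tilde W_{\alpha(k)} P$ restricted to $\cF(\cR)$, each intermediate projection can therefore be dropped inductively, yielding $W_\alpha = P \tilde W_\alpha|\cF(\cR) = P \tilde W_\alpha$ and, by adjunction, $W_\alpha^* = \tilde W_\alpha^*|\cF(\cR)$. I expect the main obstacle to be the middle step: keeping the shifting indices of $R$, $Z$, and $L_i$ aligned through the adjoint computation while verifying that $L_i^*$ sends the symmetric tensors $(\bC^d)^{\circledS k}$ into $(\bC^d)^{\circledS (k-1)}$, which is what ensures the image of $\tilde W_i^*$ remains inside $\cF(\cR)$.
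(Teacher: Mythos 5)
Your proof is correct, and it matches the paper's computation at its core: the identity $\tilde W_i^* R_{k+1}\xi = L_i^* Z_{k+1}^* R_{k+1}\xi = L_i^*(I\otimes R_k)\xi = R_k L_i^*\xi$ is exactly the paper's argument, except that you evaluate it on the spanning vectors $w^{(k)*}=(\sum_j \overline{w_j}e_j)^{\otimes k}$ and extend by finite-dimensionality of each $\cR_k^0$ plus boundedness, whereas the paper applies it to an arbitrary $\xi\in(\bC^d)^{\circledS(k+1)}$ and invokes $L_i^*(\bC^d)^{\circledS(k+1)}\subseteq(\bC^d)^{\circledS k}$ directly. (You also correctly read the printed formula (\ref{Wistar}) as $R_{k-1}L_i^*w^{(k)*}$ --- the index on $R$ in the statement is off by one, and the paper's own proof confirms your reading.) The genuine difference is the boundedness of $\{Z_k\}$: the paper simply cites the proof of \cite[Theorem 4.5]{MSWS}, while you derive it from the recursion $R_k^2=\sum_{l=1}^k X_l\otimes R_{k-l}^2$, the positivity of each summand, and $X_1\geq \|X_1^{-1}\|^{-1}I$, which gives the uniform bound $Z_k^*Z_k\leq \|X_1^{-1}\|\,I$. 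This makes the lemma self-contained and makes explicit which admissibility hypotheses (positivity of the $X_k$ and invertibility of $X_1$) the bound actually uses. Your inductive treatment of the word case via $P_{\cF(\cR)}\tilde W_i(I-P_{\cF(\cR)})=0$ is also correct and supplies a step the paper leaves implicit.
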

\begin{proof}
The boundedness of the sequence $\{Z_k\}$ follows from the proof of \cite[Theorem 4.5]{MSWS}. For the other statement, write $L_i:(\bC^d)^{\otimes k} \rightarrow (\bC^d)^{\otimes (k+1)}$ for the operator $L_i\theta =e_i\otimes \theta$ and compute (using Equation (\ref{Zk}) and the fact that, for every $k$, $L_i^*(\bC^d)^{\circledS (k+1)}\subseteq  (\bC^d)^{\circledS (k)}$)
$$\tilde{W}_i^*\cR_{k+1}^0=\tilde{W}_i^*R_{k+1}(\bC^d)^{\circledS (k+1)}=L_i^*(I_{\bC^d}\otimes R_k)R_{k+1}^{-1}R_{k+1}(\bC^d)^{\circledS (k+1)}=$$   $$L_i^*(I_{\bC^d}\otimes R_k)(\bC^d)^{\circledS (k+1)}=
 R_k L_i^*(\bC^d)^{\circledS (k+1)}\subseteq R_k (\bC^d)^{\circledS (k)}=\cR_k^0.
$$

\end{proof}

It follows from the boundedness of $\{Z_k\}$, Equation (\ref{Zk}) and the fact that $R_k$ is self adjoint, that
\be\label{Rk2}
I_{\bC^d}\otimes R_{k-1}^2=(I_{\bC^d}\otimes R_{k-1})(I_{\bC^d}\otimes R_{k-1})^*=R_kZ_kZ_k^*R_k\leq CR_k^2
\ee
where $C=\sup_k{||Z_k||^2}$.

\begin{lemma}\label{WinDcp}
$W=\{W_i\}\in \overline{D}_{cp}(X,\cF(\cR)) $.
\end{lemma}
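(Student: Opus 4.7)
My plan is to verify, in turn, the three conditions defining $\overline{D}_{cp}(X,\cF(\cR))$: commutativity of the $W_i$'s, the inequality $\Phi_W(I)\le I$, and purity $\Phi_W^n(I)\searrow 0$. Commutativity is essentially built into the identification of the $W_i$ with the coordinate multipliers on $H_{K^R}$. Using (\ref{Wistar}) together with the fact that $w^{(k)*}=(w^*)^{\otimes k}$ is symmetric, one computes, for $w\in D(X,\bC)$,
\begin{equation*}
 W_i^*\, u(k_w) \;=\; \sum_{k\ge 1} R_{k-1} L_i^* (w^*)^{\otimes k} \;=\; \overline{w_i}\,u(k_w),
\end{equation*}
so under the unitary $u$ of Proposition~\ref{Kzw} each $W_i^*$ is conjugate to $M_{z_i}^*$ on the dense span of kernel functions. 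Since the $M_{z_i}$ obviously commute, so do the $W_i$. (Alternatively, iterating (\ref{Wistar}) yields $W_i^*W_j^*R_k\xi=R_{k-2}L_j^*L_i^*\xi$ for symmetric $\xi$, and $L_i^*,L_j^*$ commute on symmetric tensors.)

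For $\Phi_W(I)\le I$, I would first compute $\Phi_{\tilde W}(I_{\cF(E)})$ on the ambient full Fock space. From $\tilde W_i\theta = R_{k+1}^{-1}(e_i\otimes R_k\theta)$ on $(\bC^d)^{\otimes k}$ one obtains, by induction on $n$,
\begin{equation*}
 \tilde W^{(n)}\big|_{(\bC^d)^{\otimes n}\otimes (\bC^d)^{\otimes m}} \;=\; R_{n+m}^{-1}\big(I_{(\bC^d)^{\otimes n}}\otimes R_m\big),
\end{equation*}
so $\tilde W^{(n)}(X_n\otimes I)\tilde W^{(n)*}\big|_{(\bC^d)^{\otimes p}} = R_p^{-1}(X_n\otimes R_{p-n}^2)R_p^{-1}$ for $1\le n\le p$. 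Summing in $n$ and invoking the fundamental recursion $R_p^2=\sum_{n=1}^p X_n\otimes R_{p-n}^2$ ($p\ge 1$) collapses this sum to $I_{(\bC^d)^{\otimes p}}$, while the $p=0$ layer is killed by every $\tilde W_i^*$; hence $\Phi_{\tilde W}(I_{\cF(E)})=I_{\cF(E)}-P_{\bC}$, where $P_{\bC}$ is the projection onto $\cR_0^0=\bC$. Because $W_\alpha^*=\tilde W_\alpha^*|\cF(\cR)$ and $W_\alpha=P_{\cF(\cR)}\tilde W_\alpha|\cF(\cR)$ (Lemma~\ref{Wi}), and because $\tilde W_\alpha^*\cF(\cR)\subseteq\cF(\cR)$, I can push $P_{\cF(\cR)}$ through each summand to conclude $\Phi_W(I_{\cF(\cR)})=P_{\cF(\cR)}\Phi_{\tilde W}(I_{\cF(E)})|\cF(\cR)=I_{\cF(\cR)}-P_{\bC}\le I$.

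For purity, I would apply the formula $\Phi_W^n(I)=\sum_{m\ge n}W^{(m)}(X(m,n)\otimes I)W^{(m)*}$ from the preliminaries together with the same compression argument. Each summand, restricted to $\cR_p^0$, equals $P_{\cF(\cR)}R_p^{-1}(X(m,n)\otimes R_{p-m}^2)R_p^{-1}|_{\cR_p^0}$ when $m\le p$ and vanishes when $m>p$, because $\tilde W^{(m)*}$ lowers the grading by $m$. Consequently $\Phi_W^n(I)|_{\cR_p^0}=0$ as soon as $n>p$. Since the algebraic sum $\oplus_p\cR_p^0$ is dense in $\cF(\cR)$ and $\|\Phi_W^n(I)\|\le 1$ by the previous paragraph, Lemma~\ref{bn} delivers $\Phi_W^n(I)\searrow 0$.

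The main obstacle is the second step: keeping straight the interplay between $W$ and its dilation $\tilde W$, checking summand-by-summand that the compression of $\Phi_{\tilde W}(I)$ to $\cF(\cR)$ really computes $\Phi_W(I)$, and recognising that the explicit closed form $\tilde W^{(n)}=R_{n+m}^{-1}(I_n\otimes R_m)$ together with the recursion $R_p^2=\sum_n X_n\otimes R_{p-n}^2$ is exactly what forces the telescoping $\Phi_{\tilde W}(I)=I-P_{\bC}$. Once those two ingredients are in place, commutativity and purity are quick deductions.
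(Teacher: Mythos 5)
Your proof is correct and follows the same overall architecture as the paper's: establish $\Phi_{\tilde W}(I_{\cF(E)})=I_{\cF(E)}-P_0$ on the full Fock space and compress to $\cF(\cR)$ (legitimate because Lemma~\ref{Wi} gives $\tilde W_i^*\cF(\cR)\subseteq\cF(\cR)$), check commutativity on symmetric tensors via $L_i^*L_j^*=L_j^*L_i^*$, and deduce purity from Lemma~\ref{bn} together with the fact that $W^{(m)*}$ annihilates $\cR_p^0$ for $m>p$. The one substantive difference is that where the paper simply cites \cite[(5.2)]{MSWS} for the identity $\Phi_{\tilde W}(I)=I-P_0$, you derive it from scratch via the closed form $\tilde W^{(n)}|_{(\bC^d)^{\otimes n}\otimes(\bC^d)^{\otimes m}}=R_{n+m}^{-1}(I_{(\bC^d)^{\otimes n}}\otimes R_m)$ and the recursion $R_p^2=\sum_{n=1}^p X_n\otimes R_{p-n}^2$; this computation is correct and makes the lemma self-contained, at the cost of a short induction. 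Your primary commutativity argument (reading off $W_i^*u(k_w)=\overline{w_i}\,u(k_w)$ from (\ref{Wistar}) and using density of the kernel functions) is also valid and arguably cleaner, though it is equivalent to the paper's direct computation, which you correctly note as the alternative.
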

\begin{proof}
It is shown in \cite[(5.2)]{MSWS} that $\Sigma_{k=1}^{\infty}\tilde{W}^{(k)}(X_k\otimes I_{\cF(\bC^d)})\tilde{W}^{(k) *} =I_{\cF(\bC^d)} -P_0$ where $P_0$ is the projection onto $\bC\subseteq \cF(\bC^d)$. Since $P_{\cF(\cR)}\tilde{W}^{(k)}=W^{(k)}$, we have $\Sigma_{k=1}^{\infty}W^{(k)}(X_k\otimes I_{\cF(\cR)})W^{(k) *} =I_{\cF(\cR)} -P_0 \leq I$. Thus $W\in \overline{D}(X,\cF(\cR))$.

To show that $W$ is a commutative tuple recall first that, for $\xi\in (\bC^d)^{\circledS (k+1)}$, $\tilde{W}_i^* R_{k+1}\xi=R_kL_i^*\xi$ (see the computation in the proof of Lemma~\ref{Wi}). Thus, for such $\xi$ and $i,j$,
$$W_i^*W_j^*R_{k+1}\xi=\tilde{W}_i^*\tilde{W}_j^*R_{k+1}\xi=\tilde{W}_i^*R_kL_i^*\xi=R_{k-1}L_j^*L_i^*\xi.$$ Since $L_i^*L_j^*=L_j^*L_i^*$ on $(\bC^d)^{\circledS (k+1)}$, $W_i^*W_j^*=W_j^*W_i^*$ for all $i,j$ implying that $W$ is a commutative tuple.

It is left to prove that it is pure. For this, we use Lemma~\ref{bn}. Since $span\{ R_l \xi^{\otimes l} :\; l\geq 0 ,\; \xi\in \mathbb{C}^d \}$ is dense in $\mathcal{F}(\cR)$, we need to show that
$$b(W)^{(n) *}R_l\xi^{\otimes l} \rightarrow_{n\rightarrow \infty} 0.$$
Since, for $m>l$, $W^{(m) *}R_l\xi^{\otimes l}=0$, we have
$$b(W)^{(n) *}R_l\xi^{\otimes l}=\Sigma_{m=n}^{\infty} \oplus (X(m,n)\otimes I_{\cF(\cR)})W^{(m) *}R_l\xi^{\otimes l}$$     $$=\Sigma_{m=n}^{l} \oplus (X(m,n)\otimes I_{\cF(\cR)})W^{(m) *}R_l\xi^{\otimes l} \rightarrow 0$$ as $n\rightarrow \infty$.

\end{proof}

Consider the reproducing kernel Hilbert space $H_{K^R}$ as in Proposition~\ref{Kzw}. Recall that its elements are scalar-valued functions defined on $D(X,\bC)$ and a function $f:D(X,\bC)\rightarrow \bC$ is called a \emph{multiplier} if for every $g\in H_{K^R}$, $fg$ also lies in $H_{K^R}$. In this case, we write $M_{f}$ for the operator that sends $g\in H_{K^R}$ to $fg$. By the closed graph theorem, $M_f$ is a bounded operator.

\begin{lemma}\label{Mzi}
	We keep the notation of Proposition~\ref{Kzw}.
	\begin{enumerate}
		\item[(1)] For every $1\leq i \leq d$, write $z_i$ for the function on $D(X,\bC)$ defined by $z_i(w)=w_i$. Then $z_i$ is a multiplier of $H_{K^R}$ and we have
\be\label{MW}
W_i u=u M_{z_i}.
\ee
		\item[(2)]  $\cap Ker(M_{z_i}^*-\overline{w}_i)=\bC k^R_w$ for $w\in D(X,\bC)$.
	\end{enumerate}
\end{lemma}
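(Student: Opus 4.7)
My plan for part (1) is to transfer the shift tuple $W$ from $\cF(\cR)$ to $H_{K^R}$ via the unitary $u$ of Proposition~\ref{Kzw} and identify the resulting bounded operator $\tilde M_i := u^{-1} W_i u$ as $M_{z_i}$. The key calculation will be the action of $\tilde M_i^*$ on kernel functions. Observing that $w^{(k)*}(1) = \bar w^{\otimes k}$ where $\bar w := \sum_i \overline{w_i} e_i$, one gets $L_i^* w^{(k)*} = \overline{w_i}\, w^{(k-1)*}$; together with (\ref{Wistar}) and the fact that $W_i^*$ annihilates the grade-$0$ piece $\bC = \cR_0^0 \subseteq \cF(\cR)$, this gives
\[
W_i^* u(k^R_w) \;=\; \sum_{k\geq 1} R_{k-1} L_i^* w^{(k)*} \;=\; \overline{w_i} \sum_{k\geq 0} R_k w^{(k)*} \;=\; \overline{w_i}\, u(k^R_w),
\]
so $\tilde M_i^* k^R_w = \overline{w_i}\, k^R_w$. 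Part (1) then follows by the standard reproducing-kernel argument: for any $g\in H_{K^R}$ and $w\in D(X,\bC)$,
\[
(\tilde M_i g)(w) \;=\; \langle k^R_w, \tilde M_i g\rangle \;=\; \langle \tilde M_i^* k^R_w, g\rangle \;=\; w_i\, g(w),
\]
so $z_i$ is a multiplier with $M_{z_i} = \tilde M_i$, equivalently $W_i u = u M_{z_i}$.

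For part (2), the easy inclusion $\bC k^R_w \subseteq \bigcap_i \ker(M_{z_i}^* - \overline{w_i})$ is immediate from (1). For the reverse inclusion, I would take $g \in H_{K^R}$ satisfying $M_{z_i}^* g = \overline{w_i}\, g$ for every $i$ and set $\xi := u(g) \in \cF(\cR)$, so that $W_i^* \xi = \overline{w_i}\, \xi$ for every $i$. Decomposing $\xi$ with respect to the orthogonal grading, write $\xi = \sum_{m\geq 0} R_m \eta_m$ with $\eta_m \in (\bC^d)^{\circledS m}$ (uniquely determined since each $R_m$ restricted to $(\bC^d)^{\circledS m}$ is invertible). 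Applying $W_i^*$ term-by-term via (\ref{Wistar}) and matching grade-$m$ summands on both sides of $W_i^* \xi = \overline{w_i}\, \xi$ produces the recursion
\[
L_i^* \eta_{m+1} \;=\; \overline{w_i}\, \eta_m, \qquad m\geq 0,\ 1\leq i\leq d.
\]
Because $\bigoplus_i L_i^*$ is injective on $(\bC^d)^{\circledS(m+1)}$ (its components read the first-slot data, which by symmetry of the tensor pins down every coefficient), this recursion uniquely determines $\eta_{m+1}$ from $\eta_m$. Starting from $\eta_0 = c \in \bC$, the ansatz $\eta_m = c\,\bar w^{\otimes m}$ satisfies it, so by uniqueness $\xi = c \sum_m R_m w^{(m)*} = c\cdot u(k^R_w)$ and hence $g = c\, k^R_w$.

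The main technical point I expect to have to handle carefully is the bookkeeping in the recursion step of (2) --- in particular, confirming that $L_i^*$ sends symmetric tensors to symmetric tensors (so the recursion stays inside the symmetric tower $(\bC^d)^{\circledS m}$) and that $\bigoplus_i L_i^*$ is injective there, both of which reduce to direct index manipulations on symmetric tensors. Part (1), by contrast, is a fairly routine reproducing-kernel calculation once the adjoint action of $W_i^*$ on $u(k^R_w)$ has been unpacked using (\ref{Wistar}).
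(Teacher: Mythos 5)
Your argument is correct, and part (1) takes a genuinely different route from the paper, while part (2) is essentially the paper's own argument. For the multiplier property in (1), the paper does not pass through $W_i$ at all: it verifies directly that the kernel $C\,K^R(z,w)-\sum_n z_nK^R(z,w)\overline{w_n}$ is positive (with $C=\sup_k\|Z_k\|^2$, via the estimate $I_{\bC^d}\otimes R_{k-1}^2\leq CR_k^2$ of (\ref{Rk2})) and invokes the kernel-positivity characterization of multipliers, obtaining $M_{z_i}^*k^R_w=\overline{w_i}k^R_w$ as a consequence; only then does it check the intertwining (\ref{MW}) by comparing $uM_{z_i}^*u^*$ with $W_i^*$ on the vectors $\sum_kR_kw^{(k)*}$. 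You instead start from the boundedness of $W_i$ (Lemma~\ref{Wi}), conjugate by the unitary $u$, show the transported operator has the kernel functions as eigenvectors of its adjoint, and read off pointwise that it is multiplication by $z_i$. Both arguments ultimately rest on $\sup_k\|Z_k\|<\infty$, but yours gets the multiplier property and (\ref{MW}) in one stroke and avoids the positivity computation, at the cost of not exhibiting the contractivity-type kernel inequality that the paper's computation incidentally provides (and which is reused in spirit in Proposition~\ref{Lambda}(1)). For part (2) your recursion $L_i^*\eta_{m+1}=\overline{w_i}\eta_m$ is exactly the paper's $\overline{w_i}\xi_k=L_i^*\xi_{k+1}$; you merely supply the uniqueness argument (injectivity of $\bigoplus_iL_i^*$ on symmetric tensors) that the paper leaves as ``easy to check,'' and that argument is sound.
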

\begin{proof} To prove part (1) we first show that each $z_n$ is a multiplier. For this, we need to show that there is a positive constant $C$ such that the kernel $K'(z,w):=CK^R(z,w)-z_nK^R(z,w)w_n^*$ is positive (\cite[Theorem 6.28]{PR}). Note that, since $K^R(z,w)$ is a positive kernel, so is the kernel $K_n(z,w):=z_nK(z,w)w_n^*$ for every $n$. Thus, it will suffice to show that the kernel $K''(z,w):=CK^R(z,w)-\Sigma_nz_nK^R(z,w)w_n^*$ is positive for some $C$. So, we fix $w(1),\ldots,w(m)$ in $D(X,\bC)$ and consider the matrix $(K''(w(i),w(j))_{i,j}$. The $i,j$ entry is
$$K''(w(i),w(j))=\Sigma_{k=0}^{\infty}w(i)^{(k)}R_k^2w(j)^{(k) *}-\Sigma_n\Sigma_{k=0}^{\infty}w(i)_nw(i)^{(k)}R_k^2w(j)^{(k) *}\overline{w(j)_n}. $$
Write $A(i,j)=\Sigma_n\Sigma_{k=0}^{\infty}w(i)_nw(i)^{(k)}R_k^2w(j)^{(k) *}\overline{w(j)_n}$.
But then  $$A(i,j)=\Sigma_{k=0}^{\infty} w(i)^{(k+1)}(I\otimes R_{k}^2)w(j)^{(k+1) *} $$ and it follows from (\ref{Rk2}) that $A$ is smaller than $B$ where $$B(i,j)= C \Sigma_{k=0}^{\infty} w(i)^{(k+1)} R_{k+1}^2 w(j)^{(k+1) *}$$ for $C=\sup{||Z_k||^2}$. Thus, $K''$ is a positive kernel and each $z_i$ is a multiplier. It follows that $M_{z_i}^*k_w=\overline{w_i}k_w$.

To prove (\ref{MW}), we compute $$uM_{z_i}^*u^*(\Sigma_{k=0}^{\infty}R_kw^{(k)*})=uM_{z_i}^*k_w^R=\overline{w_i}u(k_w^R)=\overline{w_i}\Sigma_{k=0}^{\infty}R_kw^{(k)*}.$$
But, using (\ref{Wistar}) and the fact that $W_i^*$ vanishes on the $0$th term ($\bC$), we get $$uM_{z_i}^*u^*(\Sigma_{k=0}^{\infty}R_kw^{(k)*})=W_i^*(\Sigma_{k=0}^{\infty}R_kw^{(k)*}).$$

	For part (2), note that
$\cap Ker(M_{z_i}^*-\overline{w}_i)\supseteq\bC k^R_w$ is true in general. For the other inclusion, assume $\Sigma R_k\xi_k\in \cap Ker(W_i^*-\overline{w}_i)$ for $\xi_k\in (\bC^d)^{\circledS k}\subseteq (\bC^d)^{\otimes k}$.
We compute
$$\overline{w}_i(\Sigma R_k\xi_k)=W_i^*(\Sigma R_k\xi_k)=\tilde{W}_i^*(\Sigma R_k\xi_k)=\Sigma L_i^*Z_k^*R_k\xi_k=\Sigma L_i^*(I_{\bC^d}\otimes R_{k-1})\xi_k=$$  $$\Sigma R_{k-1}L_i^*\xi_k.$$
Thus, for every $k$ and every $i$, $\overline{w}_iR_k\xi_k=R_kL_i^*\xi_{k+1}$ and, since $R_k$ is invertible,
$$\overline{w}_i\xi_k=L_i^*\xi_{k+1}.$$
It is now easy to check that, setting $\xi_0=1$, there is only a unique solution (or use the fact that it is known for the Drury-Arveson space and what we did above is reducing it to this case) and this proves (2).
\end{proof}

\section{Dilations and invariant subspaces}

For $T\in \overline{D}_{cp}(X,H)$, we write $\Delta_*(T)=(I_H-\Phi_T(I))^{\frac{1}{2}}$ and $\cD_*:=\overline{\Delta_*(T)(H)}$. We also define the operator
$$\Pi(T): H\rightarrow \mathcal{F}(\mathcal{R})\otimes \cD_* $$ by
$$\Pi(T)=(I_{\cF(\cR)}\otimes \Delta_*(T))(I_H \;\; (R_1 \otimes I_H)T^* \;\; (R_2\otimes I_H)(T^{(2)})^* \; \; \ldots )^T .$$
That is,
$$\Pi(T)h=(I_{\cF(\cR)}\otimes \Delta_*(T))(h \oplus  (R_1 \otimes I_H)T^*h \oplus (R_2\otimes I_H)(T^{(2)})^*h \oplus \ldots )$$
for $h\in H$. (See \cite[Definition 5.3]{MSWS}, \cite{MS2009} and \cite{Po1999} where this map is referred to as the Poisson kernel). It generalizes the operator $\Pi_c$ defined in \cite{BEKS} which is shown there to be a dilation of a commuting tuple to a Hilbert space valued Drury-Arveson space. Indeed, if $R_k=I$ for all $k$, $\cF(\cR)$ is the Drury-Arveson space as was shown in Lemma~\ref{Rk}.

\begin{lemma}\label{propPi}
For every $T\in \overline{D}_{cp}(X,H)$,  the map $\Pi(T): H\rightarrow \mathcal{F}(\mathcal{R})\otimes \cD_* $ is an isometry and satisfies,
$$\Pi(T)T_i^* = (W_i^*\otimes I_{\cD_*})\Pi(T) $$
for every $1\leq i \leq d$.

\end{lemma}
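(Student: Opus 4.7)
The plan is to verify the two claims separately. The isometry will be obtained by a direct computation of $\|\Pi(T)h\|^2$ combined with a telescoping identity that uses purity. The intertwining relation will follow from a clean identification of how $W_i^*$ acts on the vectors $(R_k\otimes I_H)(T^{(k)})^*h$; here one needs commutativity of $T$ to know that these vectors lie in the symmetric piece $\cR_k^0\otimes H$ on which $W_i^*$ acts nicely via the formula of Lemma~\ref{Wi}.

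For the isometry, one expands
$$\|\Pi(T)h\|^2=\sum_{k=0}^{\infty}\langle T^{(k)}(R_k^2\otimes \Delta_*(T)^2)(T^{(k)})^*h,\,h\rangle,$$
so it suffices to prove $\sum_{k\geq 0}T^{(k)}(R_k^2\otimes \Delta_*^2)(T^{(k)})^*=I$ strongly. The identity $R_k^2=\sum_{l=1}^{k}X(k,l)$ for $k\geq 1$ (immediate from the defining formula (\ref{Rk})), together with the formula $\Phi_T^m(\Delta_*^2)=\sum_{k\geq m}T^{(k)}(X(k,m)\otimes \Delta_*^2)(T^{(k)})^*$ derived in the discussion preceding Lemma~\ref{bn}, permits one to interchange summations and obtain
$$\sum_{k\geq 0}T^{(k)}(R_k^2\otimes \Delta_*^2)(T^{(k)})^*=\Delta_*^2+\sum_{m\geq 1}\Phi_T^m(\Delta_*^2)=\sum_{m\geq 0}\Phi_T^m(\Delta_*^2).$$
Telescoping $\sum_{m=0}^{n}\Phi_T^m(\Delta_*^2)=I-\Phi_T^{n+1}(I)$ and invoking purity ($\Phi_T^n(I)\searrow 0$) closes the computation. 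Everything in sight is a sum of positive operators dominated by $I$, so convergence of the partial sums and the reindexing are unproblematic.

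For the intertwining relation, the key algebraic identity is
$$(T^{(k)})^*h=\sum_{i=1}^{d}e_i\otimes (T^{(k-1)})^*T_i^*h,\qquad k\geq 1,$$
which is the adjoint of $T^{(k)}=T^{(1)}(I_E\otimes T^{(k-1)})$ and requires no commutativity. Because $T$ is commuting, Lemma~\ref{cRk} shows that $(T^{(k)})^*h$ already lies in $(\bC^d)^{\circledS k}\otimes H$, so the computation in the proof of Lemma~\ref{Wi} (using $Z_{k}^*R_{k}=I_{\bC^d}\otimes R_{k-1}$) yields
$$(\tilde W_i^*\otimes I_H)(R_k\otimes I_H)(T^{(k)})^*h=(R_{k-1}L_i^*\otimes I_H)(T^{(k)})^*h.$$
Applying $L_i^*\otimes I_H$ to the preceding decomposition collapses the leading $e_i$, so this equals $(R_{k-1}\otimes I_H)(T^{(k-1)})^*T_i^*h$. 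Summing over $k\geq 1$ (the $k=0$ summand is annihilated because $\tilde W_i^*|_{\bC}=0$) and reindexing $j=k-1$ produces $\sum_{j\geq 0}(R_j\otimes I_H)(T^{(j)})^*T_i^*h$; after the outer factor $I_{\cF(\cR)}\otimes \Delta_*(T)$, this is exactly $\Pi(T)T_i^*h$.

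The main obstacle is really in the first step: justifying that $\sum_k T^{(k)}(R_k^2\otimes \Delta_*^2)(T^{(k)})^*$ truly converges in the strong operator topology and rearranges as claimed. This rests on the row-operator description of $b(T)$ and $b(T)^{(n)}$ established before Lemma~\ref{bn}, on the hypothesis $\Phi_T(I)\leq I$, and on purity; once these are in place, the rearrangement is just Fubini for series of positive operators bounded above by $I$.
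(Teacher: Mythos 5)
Your argument is correct and is essentially the standard Poisson-kernel computation: the isometry via the telescoping identity $\sum_{m=0}^{n}\Phi_T^m(\Delta_*^2)=I-\Phi_T^{n+1}(I)$ together with purity, and the intertwining via $(T^{(k)})^*h=\sum_i e_i\otimes (T^{(k-1)})^*T_i^*h$ and the weight relation $\tilde W_i^*R_k=R_{k-1}L_i^*$. The paper does not write this out but simply defers to \cite[Lemmas 5.4 and 5.5]{MSWS}, where the same computation is carried out; your version is a self-contained rendering of that argument and matches it in substance.
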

\begin{proof}
The proof that $\Pi(T)$ is an isometry is the same as in \cite[Lemma 5.4]{MSWS}(using the assumption that $T$ is pure) and the rest follows from the proof of \cite[Lemma 5.5  1.]{MSWS}. To see this, replace $\mathfrak{z}$ there by $T$, $K(\mathfrak{z})$ there by $\Pi(T)$ here and $\xi $ there by $e_i$ here.

\end{proof}


The following result generalizes \cite[Theorem 4.1]{BEKS}.

\begin{theorem}\label{Tinvariant}
Let $T\in \overline{D}_{cp}(X,H)$ and $\cS \subseteq H$ be a subspace of $H$. Then $\cS$ is a joint $T$-invariant subspace if and only if there exists a Hilbert space $\cD$ and a partial isometry $\Pi\in B(\cF(\cR)\otimes \cD, H)$ with $T_i\Pi  = \Pi(W_i\otimes I_{\cD})$ for all $1\leq i \leq d$ and $$\cS=\Pi(\cF(\cR)\otimes \cD).$$

\end{theorem}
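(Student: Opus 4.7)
The plan is to prove both directions by reducing to the Poisson kernel construction of Lemma~\ref{propPi} applied to the restriction $T|_{\cS}$.

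For the easy direction ($\Leftarrow$), I would simply observe that the range of a partial isometry is automatically closed (since $\Pi^*\Pi$ is a projection and $\Pi$ is isometric on its initial space). Then for any $h = \Pi g \in \cS$, the intertwining relation gives $T_i h = T_i \Pi g = \Pi(W_i \otimes I_{\cD})g \in \Pi(\cF(\cR)\otimes \cD) = \cS$, so $\cS$ is joint $T$-invariant.

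For the nontrivial direction ($\Rightarrow$), the first step is to show that the restriction $T|_{\cS} := (T_1|_{\cS},\ldots,T_d|_{\cS})$ belongs to $\overline{D}_{cp}(X,\cS)$. Commutativity is inherited. The main observation is that, setting $V := (\bC^d)^{\otimes k}\otimes \cS$, the projection $P_V = I_{(\bC^d)^{\otimes k}}\otimes P_{\cS}$ commutes with $X_k\otimes I_H$ and with $X(k,n)^{1/2}\otimes I_H$, while $(T|_{\cS})^{(k)*} = P_V T^{(k)*}|_{\cS}$. Consequently, for $h \in \cS$,
\[
\|b(T|_{\cS})^{(n)*}h\|^2 = \sum_k \|P_V(X(k,n)^{1/2}\otimes I_H)T^{(k)*}h\|^2 \leq \|b(T)^{(n)*}h\|^2 = \langle \Phi_T^n(I_H)h,h\rangle,
\]
which is bounded by $\|h\|^2$ (giving $\Phi_{T|_{\cS}}(I_{\cS})\leq I_{\cS}$) and tends to $0$ as $n\to\infty$ by purity of $T$. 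Lemma~\ref{bn} then yields purity of $T|_{\cS}$.

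The second step is to apply Lemma~\ref{propPi} to $T|_{\cS}$, producing an isometry $\Pi(T|_{\cS}) : \cS \to \cF(\cR)\otimes \cD$ with $\cD := \overline{\Delta_*(T|_{\cS})\cS}$ satisfying $\Pi(T|_{\cS})(T|_{\cS})_i^* = (W_i^*\otimes I_{\cD})\Pi(T|_{\cS})$. Taking adjoints and composing with the inclusion $\iota_{\cS}: \cS \hookrightarrow H$, define
\[
\Pi := \iota_{\cS} \circ \Pi(T|_{\cS})^* : \cF(\cR)\otimes \cD \to H.
\]
Then $\Pi^*\Pi = \Pi(T|_{\cS})\Pi(T|_{\cS})^*$ is a projection, so $\Pi$ is a partial isometry with range exactly $\cS$; and the intertwining relation $T_i\Pi = \Pi(W_i\otimes I_{\cD})$ follows by combining $T_i \iota_{\cS} = \iota_{\cS}(T|_{\cS})_i$ with the adjoint of the Lemma~\ref{propPi} intertwiner. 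The only step requiring genuine work is verifying $T|_{\cS} \in \overline{D}_{cp}(X,\cS)$, since the defect inequality and purity do not restrict to a subspace in general; the point is that $X_k$ acts only on the Fock factor, which is untouched by $P_{\cS}$.
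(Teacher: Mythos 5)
Your proposal is correct and follows essentially the same route as the paper: restrict $T$ to $\cS$, verify $T|_{\cS}\in\overline{D}_{cp}(X,\cS)$, apply Lemma~\ref{propPi} to the restriction to obtain the isometry $\Pi_{\cS}$, and set $\Pi=\iota_{\cS}\circ\Pi_{\cS}^*$. The only (harmless) divergence is in the purity step: the paper establishes the identity $\Phi_{T|\cS}^n(a)=\Phi_T^n(P_{\cS}aP_{\cS})$ by an inductive computation using $P_{\cS}T^{(m)}(I_{E^{\otimes m}}\otimes P_{\cS})=T^{(m)}(I_{E^{\otimes m}}\otimes P_{\cS})$, whereas you reach the same conclusion via the norm inequality $\|b(T|_{\cS})^{(n)*}h\|\leq\|b(T)^{(n)*}h\|$ together with Lemma~\ref{bn}; both hinge on the same use of the invariance of $\cS$ and the fact that $P_{\cS}$ acts only on the Hilbert-space factor.
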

\begin{proof}
It is clear that if $\cS=\Pi(\cF(\cR)\otimes \cD)$ and $T_i\Pi  = \Pi(W_i\otimes I_{\cD})$ , then $\cS$ is a joint $T$-invariant subsplace of $H$.

For the other direction we consider the restriction $T|\cS\in B(\cS)$. Write $\iota_{\cS}:\cS\rightarrow H$ for the inclusion map and $P_{\cS}$ for its adjoint, the (orthogonal) projection onto $\cS$. It is easy to check that, for every $k\geq 1$, $(T|\cS)^{(k)}=T^{(k)}|E^{\otimes k}\otimes \cS$ and, thus,
$$\Phi_{T|\cS}(a)=\Sigma_{k=1}^{\infty} T^{(k)}(X_k\otimes P_{\cS}aP_{\cS})T^{(k) *}.$$
It follows that $T\in \overline{D}_{c}(X,H)$ and, to apply Lemma~\ref{propPi} to $T|\cS$, we need to check that it is also pure. Now
$$\Phi_{T|\cS}^2(a)=\Sigma_{k=1}^{\infty} T^{(k)}(X_k\otimes P_{\cS}\Sigma_{m=1}^{\infty} T^{(m)}(X_m\otimes P_{\cS}aP_{\cS})T^{(m) *}P_{\cS})T^{(k) *}=$$   $$\Sigma_{k,m}T^{(k)}(X_k\otimes P_{\cS} T^{(m)}(X_m\otimes P_{\cS}aP_{\cS})T^{(m) *}P_{\cS})T^{(k) *} $$ Since $\cS$ is $T$-invariant, we have $P_{\cS}T^{(m)}(I_{E^{\otimes m}}\otimes P_{\cS})=T^{(m)}(I_{E^{\otimes m}}\otimes P_{\cS})$ and, therefore,
$$\Phi_{T|\cS}^2(a)=\Sigma_{k,m}T^{(k)}(X_k\otimes  T^{(m)}(X_m\otimes P_{\cS}aP_{\cS})T^{(m) *})T^{(k) *} =$$   $$\Sigma_{k,m} T^{(k+m)}(X_k\otimes X_m\otimes P_{\cS}aP_{\cS})T^{(k+m) *}.$$
The same computation for $T$ (instead of $T|\cS$) yields
$$\Phi_{T}^2(a)=\Sigma_{k,m} T^{(k+m)}(X_k\otimes X_m\otimes a)T^{(k+m) *}$$ and, thus $$\Phi_{T|\cS}^2(a)=\Phi_{T}^2(P_{\cS}aP_{\cS}).$$
Continuing in this way (see also the computation in \cite[Proof of Theorem 4.5]{MSWS}), we get, fot every $n\geq 1$,
$$\Phi_{T|\cS}^n(a)=\Phi_{T}^n(P_{\cS}aP_{\cS})$$ and, since $T$ is pure, so is $T|\cS$.

The rest of the argument proceeds as in \cite[Proof of Theorem 4.1]{BEKS}.
We use Lemma~\ref{propPi} to get an isometry $\Pi_{\cS}:\cS \rightarrow \cF(\cR)\otimes \cD$ that satisfies $\Pi_{\cS}P_{\cS}T_i^* = (W_i^*\otimes I_{\cD})\Pi_{\cS} $. Finally, let
$$\Pi:=\iota_{\cS} \circ \Pi_{\cS}^*: \cF(\cR)\otimes \cD \rightarrow H $$ be the required map.

\end{proof}

\section{Invariant subspaces in reproducing kernel correspondences}

With the notation set up above, we now consider the kernel
\be
K^R(V,W)(a)=\Sigma_{k=0}^{\infty}V^{(k)}(R_k^2\otimes a)W^{(k) *}=\Sigma_{|\alpha|=|\beta|} V_{\alpha}r^2_{\alpha,\beta}a W_{\beta}^*
\ee where $V,W\in D(X,H)$ and $a\in B(H)$. This defines a completely positive maps-valued kernel on $D(X,H)\times D(X,H)$ with values in the bounded maps on $B(H)$.
In \cite[Theorem 4.5]{MSWS} it was shown that it is well defined (where the sum converges in the norm topology) and in \cite{GThesis} J. Good studied this kernel (in the more general context of $W^*$-correspondence $E$ over a von Neumann algebra $M$) and the reproducing kernel $W^*$-correspondence associated to this kernel. She denoted it $\cH^2(X,\sigma)$ (where $\sigma$ is the representation of $M$ on $H$).

Since, here, we are interested in the commuting tuples we write $K^R_c(V,W)$ for the restriction of $K^R$ to $D_c(X,H)\times D_c(X,H)$. 


Now we consider a kernel
$$ K: D_c(X,H)\times D_c(X,H) \rightarrow B_*(B(H),B(H))$$
that is completely positive. That means that, given $\{V_i\}_{i=1}^n$ with $V_i\in D_c(X,H)$, the map
$$(a_{i,j}) \mapsto (K(V_i,V_j)(a_{i,j})) $$
on $M_n(B(H))$ is completely positive.

Associated with such a kernel one defines a $W^*$-correspondence $E_K$ over $B(H)$.
The details of the following statements can be found in \cite[Chapter 3]{MThesis} or in \cite{GThesis}.

 In general, map-valued cp kernels  are functions $K:\Sigma \times \Sigma \rightarrow B_*(N,L)$ where $\Sigma$ is a set and $N$, $L$ are $W^*$-algebras and it is completely positive in the sense that, given $n$ and points $z_1,\ldots,z_n$ in $\Sigma$, the matrix (of maps) $(K(z_i,z_j))_{i,j=1}^{n}$ represents a (normal) completely positive map from $M_n(N)$ to $M_n(L)$.
 Associated with such a kernel one gets an $N-L$ $W^*$-correspondence $E_K$. The elements of $E_K$ are functions $f:\Sigma\rightarrow B(N,L)$.

 Conversely, given such a correspondence, with reproducing property as below, one gets a completely positive maps-valued kernel.

Although the details can be found in \cite{MThesis} and in \cite{GThesis}, we sketch those details about the construction of $E_K$ that we shall need. Assume a map-valued cp kernel $K:\Sigma \times \Sigma \rightarrow B_*(N,L)$ is given. We define functions $k_{(a,w)}:\Sigma \rightarrow B_*(N,L)$, for $a\in N$ and $w\in \Sigma$ by $$
k_{(a,w)} (z)(b)=K(z,w)(ba^*)$$
for $z \in \Sigma$ and $b \in N$. These functions generate $E_K$ as an $N-L$ correspondence where the left action of $N$ on these functions is defined by
\be\label{leftmult}
d\cdot k_{(a,w)} =k_{(ad^*,w)}
\ee and the $L$-valued inner product of these functions is given by $$ \langle k_{(b,z)},k_{(a,w)}\rangle = k_{(a,w)}(z)(b)= K(z,w)(ba^*) .$$

Thus, the elements of $E_K$ are functions $f:\Sigma\rightarrow B_*(N,L)$ and $E_K$ is generated, as a $W^*$-module by $\{k_{(a,w)}\cdot c: a\in N, w\in \Sigma, c\in L\}$.
The left action of $N$ on $f\in E_K$ is defined by $(a\cdot f)(z)(b)=f(z)(ba)$.

For $f\in E_K$, we have $$f(z)(a)=\langle k_{(a,z)},f\rangle $$ so these are the kernel functions that induce point evaluations. We also have
$$\langle k_{(b,z)}\cdot c,k_{(a,w)}\cdot d\rangle = c^*K(z,w)(ba^*)d \in L .$$

The following definition and theorem can be found in \cite[Definition 46, Lemma 47, Theorem 48]{MThesis}.

\begin{definition}\label{multiplier}
Let $N$ and $L$ be $W^*$-algebras and let $E_K$ be a reproducing kernel $W^*$-correspondence from $N$ to $L$ with associated cp kernel $K:\Sigma \times \Sigma \rightarrow B_*(N,L)$. Then a function $\phi:\Sigma \rightarrow L$ is called a multiplier of $E_K$ if for each $f\in E_K$, the function $\phi f$, defined by $(\phi f)(z)(b)=\phi(z)f(z)(b)$ is in $E_K$.
\end{definition}

\begin{theorem}\label{charmultiplier}
Let $N$ and $L$ be $W^*$-algebras and let $E_K$ be a reproducing kernel $W^*$-correspondence from $N$ to $L$ with associated cp kernel $K:\Sigma \times \Sigma \rightarrow B_*(N,L)$. Then
\begin{enumerate}
\item[(1)] If $\phi$ is a multiplier of $E_K$ then the map $M_{\phi}:E_K\rightarrow E_K$, defined by $M_{\phi}f=\phi f$, is in $\cL(E_K)\cap \varphi_{E_K}(N)'$.
\item[(2)] If $\phi$ is a multiplier of $E_K$ then, for all $(a,w)\in N\times \Sigma$, $M^*_{\phi}k_{a,w}=k_{a,w}\phi(w)^*$.
\item[(3)] A function $\phi:\Sigma \rightarrow L$ is a multiplier of $E_K$ with $||M_{\phi}||\leq 1$ if and only if the map $K_{\phi}:\Sigma \times \Sigma \rightarrow B_*(N,L)$ defined by $K_{\phi}(w,z)=(id - Ad(\phi(w),\phi(z)))\circ K(w,z)$ is a cp kernel.

\end{enumerate}

\end{theorem}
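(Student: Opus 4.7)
The proof naturally splits along the three parts. For part (1), I verify the three properties of $M_\phi$ directly. Right $L$-linearity follows from $(M_\phi(f\cdot c))(z)(b)=\phi(z)f(z)(b)c=(M_\phi f\cdot c)(z)(b)$, and commutation with the left action $\varphi_{E_K}(d)$ from $(d\cdot f)(z)(b)=f(z)(bd)$. Boundedness comes from the closed graph theorem, since point evaluation $f\mapsto f(z)(b)=\langle k_{(b,z)},f\rangle$ is continuous on $E_K$. Adjointability is then automatic because $E_K$ is a self-dual $W^*$-correspondence, giving $M_\phi\in\cL(E_K)\cap\varphi_{E_K}(N)'$. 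Part (2) is a one-line computation from the reproducing property:
$$\langle k_{(a,w)}\phi(w)^*,f\rangle=\phi(w)\langle k_{(a,w)},f\rangle=\phi(w)f(w)(a)=(\phi f)(w)(a)=\langle k_{(a,w)},M_\phi f\rangle,$$
so $M_\phi^*k_{(a,w)}=k_{(a,w)}\phi(w)^*$.

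For the forward direction of part (3), I use part (2) to evaluate, for $g=\sum_i k_{(a_i,w_i)}c_i$ in the dense span of kernel functions,
$$\langle g,(I-M_\phi M_\phi^*)g\rangle=\sum_{i,j}c_i^*\bigl(K(w_i,w_j)(a_ia_j^*)-\phi(w_i)K(w_i,w_j)(a_ia_j^*)\phi(w_j)^*\bigr)c_j=\sum_{i,j}c_i^*K_\phi(w_i,w_j)(a_ia_j^*)c_j.$$
The hypothesis $\|M_\phi\|\leq 1$ forces the left side to be non-negative, giving positivity of the quadratic form associated to $K_\phi$; repeating the computation with matrix-valued amplifications $a_i\in M_n(N)$ and $c_i\in M_n(L)$ upgrades this to full complete positivity of $K_\phi$.

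For the converse, suppose $K_\phi$ is cp. I define a candidate $T$ on the dense span of kernel functions by $T(k_{(a,w)}\cdot c)=k_{(a,w)}\phi(w)^*c$, extended by linearity, and compute
$$\|g\|^2-\|Tg\|^2=\sum_{i,j}c_i^*K_\phi(w_i,w_j)(a_ia_j^*)c_j\geq 0$$
for $g=\sum_i k_{(a_i,w_i)}c_i$. This single identity shows both that $T$ is well-defined (any null vector is sent to a null vector) and that $\|T\|\leq 1$, so $T$ extends to a contraction on $E_K$. Then $T^*\in\cL(E_K)$ satisfies
$$(T^*f)(z)(b)=\langle k_{(b,z)},T^*f\rangle=\langle Tk_{(b,z)},f\rangle=\phi(z)f(z)(b)$$
for all $f\in E_K$, so $T^*f=\phi f$ and $\phi$ is a multiplier with $M_\phi=T^*$ contractive.

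The main obstacle I anticipate is twofold: first, making the passage from the scalar positivity in part (3) to full complete positivity rigorous via matrix amplifications, keeping the algebraic bookkeeping clean; second, in the converse direction, ensuring the dense span of kernel functions is $W^*$-dense enough for the extension of $T$ to land in $\cL(E_K)$ rather than merely in the space of bounded $L$-linear maps on a norm-dense subspace, which relies on the self-duality of $E_K$ as a $W^*$-correspondence.
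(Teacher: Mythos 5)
Your proposal is correct, but note that the paper itself offers no proof of this theorem: it is quoted verbatim from Meyer's thesis (\cite[Definition 46, Lemma 47, Theorem 48]{MThesis}), so there is nothing in the source to compare against line by line. Your reconstruction is the standard argument and all the computations check out against the paper's conventions (inner products linear in the second variable, $\langle k_{(b,z)},k_{(a,w)}\rangle=K(z,w)(ba^*)$, $\langle x\cdot c,y\rangle=c^*\langle x,y\rangle$, and $Ad(\phi(w),\phi(z))(x)=\phi(w)x\phi(z)^*$, which is consistent with how the paper uses $K_\phi$ in the proof of Theorem~\ref{condK}). Two points deserve to be made airtight. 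First, the passage from positivity of $\sum_{i,j}c_i^*K_\phi(w_i,w_j)(a_ia_j^*)c_j$ to complete positivity of the kernel is most cleanly done not by an ad hoc ``matrix amplification'' but by the repetition-of-points trick: the $m$-fold amplification of the $n\times n$ matrix of maps $(K_\phi(w_i,w_j))$ is exactly the matrix of maps attached to the $mn$-tuple obtained by listing each $w_i$ with multiplicity $m$, and positive elements of $M_{mn}(N)$ are sums of matrices of the form $(a_ia_j^*)$, so your one scalar identity already yields the full cp statement. Second, in the converse direction of (3), the span of the kernel functions is only guaranteed to be $w^*$-dense (the paper says $E_K$ is \emph{generated as a $W^*$-module} by the $k_{(a,w)}\cdot c$), so the extension of your contraction $T$ to all of $E_K$, and its adjointability, both rest on Paschke's extension theorem for bounded module maps into self-dual $W^*$-modules; you flag this correctly, and it is exactly the ingredient needed, but it should be invoked explicitly rather than left as an anticipated obstacle.
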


Assume from now on that $N=L=B(H)$, $\Sigma=D_c(X,H)$ and $K$ is such a map-valued kernel (of maps on $B(H)$).

\begin{definition}\label{XH}
Suppose $K$ is as above and, for every $i$, the map $S_i:D_c(X,H)\rightarrow B(H)$, defined by $S_i(T)=T_i$, is a bounded multiplier of $E_K$. If $S:=(S_i)$ satisfies
$$\Sigma_{k=1}^{\infty} M_S^{(k)} (X_k\otimes I_{E_K})M_S^{(k) *} \leq I$$
We shall say that $E_K$ is an \emph{(X,H)-contractive reproducing kernel correspondence}.
\end{definition}

\begin{proposition}
Suppose $\Phi_{M_S\otimes_{B(H)}H}(I)\leq I$ then $\Phi_{M_S\otimes_{B(H)}H}^n(I)\searrow 0$.
Thus, if $E_K$ is an (X,H)-contractive reproducing kernel correspondence then
\be\label{Xcontractivecondition}
M_S \otimes_{B(H)}I_H \in \overline{D}_{cp}(X,E_K\otimes_{B(H)}H).
\ee
\end{proposition}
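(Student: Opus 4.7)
\emph{The plan.} I plan to apply Lemma~\ref{bn} to reduce purity of the tuple $T := M_S \otimes_{B(H)} I_H$ on $E_K \otimes_{B(H)} H$ to checking that $\|b(T)^{(n)*} h\| \to 0$ for $h$ in a spanning subset. The tuple is commuting because, for every $T' \in D_c(X,H)$, $S_i(T') S_j(T') = T'_i T'_j = T'_j T'_i$, which forces $M_{S_i} M_{S_j} = M_{S_j} M_{S_i}$ on $E_K$. I shall use the spanning set $\{k_{(a,w)} \otimes \xi : a \in B(H),\; w \in D_c(X,H),\; \xi \in H\}$; its closed linear span is all of $E_K \otimes_{B(H)} H$ because the kernel functions $\{k_{(a,w)} \cdot c\}$ generate $E_K$ as a right $W^*$-module and $k_{(a,w)} \cdot c \otimes \xi = k_{(a,w)} \otimes c\xi$ under the balanced tensor.

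\emph{Key identity.} The crux of the proof is the formula
\[
\langle k_{(a,w)} \otimes \xi,\; \Phi^n_T(I)(k_{(a,w)} \otimes \xi)\rangle \;=\; \langle \xi,\; \Phi^n_w\!\bigl(K(w,w)(aa^*)\bigr)\xi\rangle,
\]
which I derive as follows. By Theorem~\ref{charmultiplier}(2), $M_{S_i}^* k_{(a,w)} = k_{(a,w)} \cdot w_i^*$, so $(M_{S_i}^* \otimes I_H)(k_{(a,w)} \otimes \xi) = k_{(a,w)} \otimes w_i^* \xi$. Iterating over words $\alpha$ with $|\alpha| = m$ (using that $w$ is commuting) yields $T^{(m)*}(k_{(a,w)} \otimes \xi) = \sum_{|\alpha|=m} e_\alpha \otimes k_{(a,w)} \otimes w_\alpha^* \xi$. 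Substituting into $\Phi^n_T(I) = \sum_{m \geq n} T^{(m)}(X(m,n) \otimes I) T^{(m)*}$ and using the $B(H)$-valued inner product $\langle k_{(a,w)}, k_{(a,w)}\rangle_{E_K} = K(w,w)(aa^*)$, the summand at level $m$ collapses exactly to $\langle \xi, w^{(m)}(X(m,n) \otimes K(w,w)(aa^*))w^{(m)*}\xi\rangle$, and summing over $m \geq n$ gives the right-hand side.

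\emph{Conclusion.} Because $w \in D_c(X,H)$, we have $\|\Phi_w(I)\| < 1$. A one-line induction using $0 \leq C \leq \|C\| I$ shows $\|\Phi^n_w(C)\| \leq \|C\|\,\|\Phi_w(I)\|^n$ for every $C \geq 0$. Applied to $C = K(w,w)(aa^*)$, this gives $\langle \xi, \Phi^n_w(K(w,w)(aa^*))\xi\rangle \to 0$. By the key identity, $\|b(T)^{(n)*}(k_{(a,w)} \otimes \xi)\| \to 0$; Lemma~\ref{bn} and density of the spanning set then yield $\Phi^n_T(I) \searrow 0$. Combined with commutativity and the standing hypothesis $\Phi_T(I) \leq I$, this gives $T \in \overline{D}_{cp}(X, E_K \otimes_{B(H)} H)$.

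\emph{Main obstacle.} The delicate step is the key identity: one must track how the right $B(H)$-action $k_{(a,w)} \mapsto k_{(a,w)} \cdot w_i^*$ (by which $M_{S_i}^*$ acts on kernel functions) passes across the balanced tensor to become an honest left action on $\xi$, and then reorganize the sum over words of length $m$ so that the $w_\alpha$ factors reassemble into $\Phi^n_w$ acting on the positive element $K(w,w)(aa^*)$. Everything else---commutativity, density of the spanning set, and the elementary norm bound on $\Phi^n_w$---is routine.
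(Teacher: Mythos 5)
Your proof is correct and follows essentially the same route as the paper: both reduce purity of $M_S\otimes I_H$ via Lemma~\ref{bn} to the action of $M_{S_\alpha}^*$ on the kernel functions $k_{(a,w)}\otimes\xi$, which transfers the computation to $\Phi_w^n$ for the point $w\in D_c(X,H)$. The only differences are cosmetic and in your favor: you explicitly carry the factor $K(w,w)(aa^*)$ coming from the $B(H)$-valued inner product across the balanced tensor (the paper silently identifies $\|k_{a,T}\otimes b(T)^{(n)*}ch\|$ with $\|b(T)^{(n)*}ch\|$), and you prove $\|\Phi_w^n(C)\|\leq\|C\|\,\|\Phi_w(I)\|^n$ directly instead of citing \cite[Lemma 5.4(4)]{MSWS}.
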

\begin{proof}
For simplicity, we will write here $G$ for $E_K\otimes_{B(H)}H$, $M_i$ for $M_{S_i}\otimes H\in B(G)$ and $M=(M_1,\ldots,M_d)$ for $M_S\otimes I_H$.
By Lemma~\ref{bn}, we need to show that $||b(M)^{(n) *}(k_{a,T}\cdot c \otimes_{B(H)} h)||\rightarrow_n 0$ for $a,c\in B(H)$, $T\in D_c(X,H)$ and $h\in H$ (where $k_{a,T}$ is the kernel function in $E_K$ and $b(M)$ is the row defined in (\ref{bT}) with $M$ in place of $T$).

Recall that $S_i$ is the $i$th coordinate function on $D_c(X,H)$ and $M_{S_i}$ is the corresponding multiplier. Thus $M_{S_i}^*k_{a,T}\otimes_{B(H)}f=k_{a,T}\cdot T_i^*\otimes_{B(H)}f=k_{a,T}\otimes_{B(H)}T_i^*f$.
It follows that $$M_S^*k_{a,T} \otimes_{B(H)}cf=M_S^*k_{a,T}\cdot c \otimes_{B(H)}f=k_{a,T}\otimes_{B(H)}T^*cf.$$
Continuing this way, we can write, for every $m$,
$$M^{(m)*}(k_{a,T}\cdot c \otimes_{B(H)} h)=M_S^{(m)*}k_{a,T} \otimes_{B(H)}cf=k_{a,T}\otimes_{B(H)}T^{(m)*}cf$$
and
$$b(M)^{(n)*}(k_{a,T}\cdot c \otimes_{B(H)} h)=\Sigma_{m=n}^{\infty}\oplus (X(m,n)\otimes I_G)M^{(m)*}(k_{a,T}\cdot c \otimes_{B(H)} h)=$$ $$k_{a,T}\otimes_{B(H)}(\Sigma_{m=n}^{\infty}\oplus (X(m,n)\otimes I_G)T^{(m)*} ch)=k_{a,T}\otimes_{B(H)}b(T)^{(n)*}ch .$$
But $||b(T)^{(n)*}ch||^2=\langle \Phi_T^n(I)ch,ch\rangle$ and, since $T\in D(X,H)$, $\Phi_T^n(I)\searrow 0$ by \cite[Lemma 5.4 (4)]{MSWS} and this completes the proof.

\end{proof}



\begin{proposition}\label{Lambda}
\begin{enumerate}
\item[(1)] For every $1\leq i \leq d$, $S_i$ is a multiplier of $E_{K^R_c}$.
\item[(2)] There is an isomorphism
$$\Lambda: E_{K^R_c}\otimes_{B(H)}H \rightarrow \cF(\cR)\otimes H$$
 that satisfies
 \begin{enumerate}
\item[a.]\be\label{intertwineWi}
\Lambda^*(W_i\otimes I_H)\Lambda =M_{S_i}\otimes I_H
\ee for every $1\leq i \leq d$.
\item[b.] For $b\in B(H)$, $$ \Lambda (\varphi_{E_{K^R_c}}(b)\otimes I_H)=(I_{\cF(\cR)}\otimes b)\Lambda $$ where $\varphi_{E_{K^R_c}}$ is the left action of $B(H)$ on $E_{K^R_c}$.
    \end{enumerate}
\item[(3)] $E_{K^R_c}$ is an $(X,H)$-contractive reproducing kernel correspondence.

\end{enumerate}
\end{proposition}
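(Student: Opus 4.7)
For part (1), my plan is to apply Theorem~\ref{charmultiplier}(3) to $S_i/\sqrt{C}$ for a suitable constant $C$, which reduces showing that $S_i$ is a multiplier of $E_{K^R_c}$ to verifying that the kernel $CK^R_c - S_i(\cdot)K^R_c(\cdot,\cdot)S_i(\cdot)^*$ is completely positive. Expanding with the identity $V_i V^{(k)}=V^{(k+1)}(L_i\otimes I_H)$, where $L_i\xi = e_i\otimes\xi$, yields
\[V_i K^R_c(V,W)(a) W_i^* = \sum_{k\geq 1} V^{(k)}\bigl((L_i R_{k-1}^2 L_i^*)\otimes a\bigr) W^{(k)*}.\]
Since $L_i R_{k-1}^2 L_i^*\leq\sum_j L_j R_{k-1}^2 L_j^*=I_E\otimes R_{k-1}^2$, which is bounded above by $CR_k^2$ by (\ref{Rk2}) with $C=\sup_k\|Z_k\|^2$, the difference $CK^R_c-S_iK^R_c S_i^*$ is, term by term, a cp kernel of the familiar form $V^{(k)}(T_k\otimes a)W^{(k)*}$ with $T_k=CR_k^2-L_iR_{k-1}^2L_i^*\geq 0$.

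For part (2), I would define $\Lambda$ on generators by
\[\Lambda\bigl(k_{(a,W)}\cdot c\otimes_{B(H)}h\bigr):=\sum_{k=0}^{\infty}(R_k\otimes a^*)W^{(k)*}ch.\]
Commutativity of $W$ forces $W^{(k)*}ch\in(\bC^d)^{\circledS k}\otimes H$, so each summand lies in $\cR_k^0\otimes H=\cR_k$, and convergence follows as in Proposition~\ref{Kzw} from $\Phi_W(I)<I$. A direct expansion, factoring $R_k^2\otimes ba^*=(R_k\otimes b)^*(R_k\otimes a^*)$ and using orthogonality of the $\cR_k$'s, shows $\Lambda$ preserves inner products. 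For surjectivity I would mimic the argument in Proposition~\ref{Kzw}: the unitary group $\{W_\lambda\otimes I_H\}_{\lambda\in\mathbb{T}}$ satisfies $(W_\lambda\otimes I_H)\Lambda(k_{(a,W)}\cdot c\otimes h)=\Lambda(k_{(a,\overline{\lambda} W)}\cdot c\otimes h)$ (note $\overline{\lambda} W\in D_c(X,H)$ when $|\lambda|=1$), so the range of $\Lambda$ is invariant under each $W_\lambda\otimes I_H$, hence under every $Q_k\otimes I_H$, and Lemma~\ref{cRk} identifies the resulting $k$-slices with a spanning set of $\cR_k$.

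Condition (b) in (2) is immediate from $\varphi(\beta)k_{(a,W)}=k_{(a\beta^*,W)}$ and $(a\beta^*)^*=\beta a^*$. For (a), Theorem~\ref{charmultiplier}(2) gives $M_{S_i}^*k_{(a,W)}=k_{(a,W)}W_i^*$, so the intertwining reduces to proving
\[(W_i^*\otimes I_H)(R_{k+1}\otimes a^*)W^{(k+1)*}ch=(R_k\otimes a^*)W^{(k)*}W_i^*ch\]
for each $k\geq 0$. The left-hand vector lies in $\cR_{k+1}^0\otimes H$, so $W_i^*\otimes I_H$ acts there as $\tilde{W}_i^*\otimes I_H$; combining the identity $\tilde{W}_i^*R_{k+1}\xi=L_i^*(I_{\bC^d}\otimes R_k)\xi$ (which follows from $Z_{k+1}=R_{k+1}^{-1}(I\otimes R_k)$, cf.\ the proof of Lemma~\ref{Wi}) with the decomposition $W^{(k+1)*}ch=\sum_j e_j\otimes W^{(k)*}W_j^*ch$ makes the contraction $L_i^*(e_j\otimes\cdot)=\delta_{ij}(\cdot)$ collapse the sum to the right-hand side.

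Part (3) then follows from (2) and Lemma~\ref{WinDcp}: since $W\in\overline{D}_{cp}(X,\cF(\cR))$, $\Phi_W(I)\leq I$, and transporting this operator inequality across the unitary $\Lambda$ gives $\Phi_{M_S\otimes I_H}(I)\leq I_{E_{K^R_c}\otimes H}$; since this equals $\bigl(\sum_k M_S^{(k)}(X_k\otimes I_{E_{K^R_c}})M_S^{(k)*}\bigr)\otimes I_H$, the $(X,H)$-contractivity of $E_{K^R_c}$ follows. I expect the tensor bookkeeping in condition (a) to be the main obstacle, since three independent slots (the Fock factor $(\bC^d)^{\otimes k}$, the operator slot $a^*$, and the underlying space $H$) must be kept properly aligned while $W_i^*$ is moved across the inductive decomposition of $W^{(k+1)*}$.
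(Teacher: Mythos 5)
Your proposal is correct and follows essentially the same route as the paper: part (1) rests on the bound $I_{\bC^d}\otimes R_{k-1}^2\leq \sup_k\|Z_k\|^2\,R_k^2$ from (\ref{Rk2}) (the paper phrases it via $L_iL_i^*\leq I$ after commuting $R_k$ past $L_i$ with (\ref{E2}), but it is the same inequality), and parts (2) and (3) use the identical definition of $\Lambda$, the same rotation/averaging argument with $W_\lambda$ and $Q_k$ plus Lemma~\ref{cRk} for surjectivity, the same $L_i^*$-collapse computation for the intertwining, and the same transport of $\Phi_W(I)\leq I$ from Lemma~\ref{WinDcp}. The only caution is notational: in your part (2a) you use $W$ simultaneously for the point of $D_c(X,H)$ and for the shift tuple on $\cF(\cR)$, which should be disambiguated as in the paper (which uses $T$ for the former).
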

\begin{proof}
Given $T\in D_c(X,H)$ we have, for $k\geq 1$ and $1\leq i \leq d$,
\be\label{E1}
T^{(k+1)}L_i=T_iT^{(k)}
 \ee
 where $L_i (f_1\otimes \cdots \otimes f_k \otimes h)=e_i\otimes f_1\otimes \cdots \otimes f_k \otimes h$ ($f_j\in E=\bC^d$, $h\in H$). To check this, just compute $T^{(k+1)}L_i (f_1\otimes \cdots \otimes f_k \otimes h)=T^{(k+1)}(e_i\otimes f_1\otimes \cdots \otimes f_k \otimes h)=T(I_E\otimes T^{(k)})(e_i\otimes f_1\otimes \cdots \otimes f_k \otimes h)=T(e_i\otimes T^{(k)}( f_1\otimes \cdots \otimes f_k \otimes h))=T_iT^{(k)}( f_1\otimes \cdots \otimes f_k \otimes h)$.

 To prove that $S_i$ is a multiplier of $E_{K_c^R}$, consider, for $T,V\in D_c(X,H)$ and $a,b\in B(H)$,
 $$S_i(T)K^R_c(T,V)(a^2)S_i(V)^*=\Sigma_{k=0}^{\infty} T_iT^{(k)}(R_k^2\otimes ba^*)V^{(k) *}V_i^*=$$ $$\Sigma_{k=0}^{\infty} T^{(k+1)}L_i(R_k^2\otimes ba^*)L_i^*V^{(k+1) *}.$$
 Using (\ref{Zk}) we have, for $f_i\in E=\bC^d$ and $h\in H$,
 $L_i(R_k \otimes a)(f_1\otimes \cdots \otimes f_k\otimes h)=e_i\otimes R_k(f_1\otimes \cdots \otimes f_k)\otimes ah=(I_E\otimes R_k)(e_i\otimes f_1\otimes \cdots \otimes f_k)\otimes ah=(R_{k+1}Z_{k+1} \otimes a)L_i (f_1\otimes \cdots \otimes f_k \otimes h)$. Thus
 \be\label{E2}
 L_i(R_k \otimes a)=(R_{k+1}Z_{k+1} \otimes a)L_i
 \ee and therefore
 \be\label{E3}
 S_i(T)K^R_c(T,V)(ba^*)S_i(V)^* \ee  $$=\Sigma_{k=0}^{\infty} T^{(k+1)}(R_{k+1}Z_{k+1} \otimes b)L_i L_i^*(Z_{k+1}^*R_{k+1}\otimes a^*)V^{(k+1) *}
    $$
 Now fix $V_1,\ldots,V_n$ in $D_c(X,H)$ and $a_1,\ldots,a_n$ in $B(H)$ and consider the $n\times n$ matrix
 $$(S_i(V_l)K^R_c(V_l,V_m)(a_la_m^*)S_i(V_m)^*)_{l,m}$$ which, by (\ref{E3}), is equal to the matrix
 $$(\Sigma_{k=0}^{\infty} V_l^{(k+1)}(R_{k+1}Z_{k+1} \otimes a_l)L_i L_i^*(Z_{k+1}^*R_{k+1}\otimes a_m^*)V_m^{(k+1) *})_{l,m}$$ $$\leq (\Sigma_{k=0}^{\infty} V_l^{(k+1)}(R_{k+1}Z_{k+1}Z_{k+1}^*R_{k+1}\otimes a_l a_m^*)V_m^{(k+1) *})_{l,m}$$
 $$\leq \sup_k||Z_k||^2 (\Sigma_{k=0}^{\infty} V_l^{(k+1)}(R_{k+1}^2\otimes a_l a_m^*)V_m^{(k+1) *})_{l,m}$$ $$\leq \sup_k||Z_k||^2(K_c^R(V_l,V_m)(a_la_m^*))_{l,m}$$
      proving that $S_i$ is a multiplier. In fact, we see that $||M_{S_i}||\leq \sup||Z_k||$ ($<\infty$). This proves (1).

 For (2), we define $\Lambda$ on generators by
 $$\Lambda(k^R_{a,T}\cdot c \otimes h)=\Sigma_{k=0}^{\infty}\oplus (R_k\otimes a^*)T^{(k)*}ch $$ for $a,c\in B(H)$ and $T\in D_c(X,H)$. By Lemma~\ref{cRk}, $(R_k\otimes a^*)T^{(k)*}ch\in \cR_k\subseteq \cF(\cR)\otimes H$. We now compute, for $a,b,c,d\in B(H)$, $T,L\in D_c(X,H)$ and $h,g \in H$,
 $$\Sigma_{k=0}^{\infty}\langle (R_k\otimes a^*)T^{(k)*}ch ,(R_k\otimes b^*)L^{(k)*}dg \rangle=\Sigma_k \langle h,c^* T^{(k)} (R_k^2\otimes ab^*)L^{(k)*}dg\rangle$$  $$=\langle h,c^* K^R(T,L)(ab^*)dg \rangle.$$ This shows that the series defining $\Lambda(k_{a,T}\cdot c \otimes h)$ converges and, thus, belongs to $\Sigma_k\oplus \cR_k=\cF(\cR)\otimes H$. We also get $$\langle \Lambda(k^R_{a,T}\cdot c \otimes h),\Lambda(k^R_{b,L}\cdot d \otimes g)\rangle=\langle h,c^* K^R(T,L)(ab^*)dg \rangle$$   $$=\langle h,\langle k^R_{a,T}\cdot c,k^R_{b,L}\cdot d\rangle g\rangle=\langle  k^R_{a,T}\cdot c \otimes h,k^R_{b,L}\cdot d \otimes g\rangle.$$ Thus, $\Lambda$ is a well defined isometry into $\cF(\cR)\otimes H$. To show that this map is surjective, note first that, for $\lambda \in \mathbb{T}$,
 \be\label{lambda}
 \Lambda(k^R_{a,\overline{\lambda}T}\cdot c \otimes h)=\Sigma_{k=0}^{\infty}\oplus \lambda^k(R_k\otimes a^*)T^{(k)*}ch.
\ee
 We now define, for every $\lambda\in \mathbb{T}$, $W_{\lambda}\in B(\cF(\cR)\otimes H)=B(\Sigma_k\oplus \cR_k)$ by $W_{\lambda}=\Sigma_k \lambda^k Q_k$ (where $Q_k$ is the projection onto $\cR_k$). Then $\{W_{\lambda}\}$ is a strongly continuous group of unitaries and (\ref{lambda}) shows that the range of $\Lambda$ is invariant under each $W_{\lambda}$.
 Since, for every $k\geq 0$, $Q_k=\int W_{\lambda}\lambda^{-k}d\lambda$ (in the strong operator topology), we see that the range of $\Lambda$ (which is a closed subspace) is invariant under $Q_k$. Since $\Lambda(k^R_{a,T}\cdot c \otimes h)=\Sigma_{k=0}^{\infty}\oplus (R_k\otimes a^*)T^{(k)*}ch $ for $a,c\in B(H)$ and $T\in D_c(X,H)$, we find that, for every $k\geq 0$ and every $a,c,T$ as above, $(R_k\otimes a^*)T^{(k)*}ch$ is in the range of $\Lambda$. It follows from Lemma~\ref{cRk} that the range of $\Lambda$ contains $\cR_k$, for every $k$, and, thus, $\Lambda$ is surjective.



To prove (\ref{intertwineWi}) we compute.
$$\Lambda(M_{S_i}^*\otimes I_H)\Lambda^*(\Sigma_{k=0}^{\infty} (R_k\otimes a^*)T^{(k)*}ch)=$$
$$ \Lambda(M_{S_i}^*\otimes I_H)( k_{a,T}\cdot c \otimes h)=\Lambda( k_{a,T}\cdot T_i^*c \otimes h)$$ $$=\Sigma_{k=0}^{\infty}
(R_k\otimes a^*)T^{(k) *}T_i^*c h=\Sigma_{k=0}^{\infty}(R_k\otimes a^*)L_i^*T^{(k+1) *}c h$$  $$= \Sigma_{k=0}^{\infty} L_i^*(Z_{k+1}^*R_{k+1}\otimes a^*)T^{(k+1) *}c h=$$  $$(W_i^*\otimes I_H)\Sigma_{k=0}^{\infty} (R_{k+1}\otimes a^*)T^{(k+1) *}c h =(W_i^*\otimes I_H)\Sigma_{k=0}^{\infty} (R_{k}\otimes a^*)T^{(k) *}c h
$$ where we used (\ref{E1}), (\ref{E2}) and, in the last equality, the fact that $W_i^*\otimes I_H$ vanishes on $H$. This proves (\ref{intertwineWi}) and completes the proof of part (2a.).

To prove (2b.) we use the definition of $\Lambda$ and (\ref{leftmult}) and compute
$$\Lambda(\varphi_{E_{K^R_c}}(b)\otimes I_H)(k^R_{a,T}\cdot c \otimes h)=\Lambda(k^R_{ab^*,T}\cdot c \otimes h)=\Sigma_{k=0}^{\infty}\oplus (R_k\otimes ba^*)T^{(k)*}ch$$ $$=(I_{\cF(\cR)}\otimes b)\Sigma_{k=0}^{\infty}\oplus (R_k\otimes a^*)T^{(k)*}ch=(I_{\cF(\cR)}\otimes b)\Lambda(k^R_{a,T}\cdot c \otimes h).$$

To prove part (3) we have to show that
$$\Sigma_{k=1}^{\infty} M_S^{(k)} (X_k\otimes I_{E_{K^R_c}})M_S^{(k) *} \leq I.$$ But, using part (2), it suffices to prove that $$\Sigma_{k=1}^{\infty} W^{(k)} (X_k\otimes I_{\cF(\cR)})W^{(k) *} \leq I$$
and this follows from Lemma~\ref{WinDcp}.
\end{proof}

For the next result, recall the notion of $\sigma$-dual of a correspondence. Let $F$ be a $W^*$-correspondence over a $W^*$-algebra $M$ and let $\sigma$ be a normal representation on a Hilbert space $H$. The $\sigma$-dual, $F^{\sigma}$ is a $W^*$-correspondence over the $W^*$-algebra $\sigma(M)'$ defined by
$$F^{\sigma}=\{Y:H \rightarrow F\otimes_{\sigma}H \;:\; Y\sigma(b)=(\varphi_F(b)\otimes I_H)Y,\; b\in M\}.$$
(See \cite{MSHardy} for the details). The $\sigma(M)'$-valued inner product is defined by $\langle Y_1,Y_2 \rangle=Y_1^*Y_2$ and the bimodule structure is defined by
$$a\cdot Y \cdot b=(I_F \otimes a)\circ Y\circ b$$ for $a,b\in \sigma(M)'$.

\begin{proposition}\label{dual}
Let $E_{K_c^R}$ be the correspondence over $B(H)$ associated with the kernel $K_c^R$ and $\sigma$ be the natural representation of $B(H)$ on $H$. Then
$$E_{K_c^R}^{\sigma}\cong \cF(\cR)$$ where the isomorphism is an isomorphism of Hilbert spaces.	
	
	\end{proposition}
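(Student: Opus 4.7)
My plan is to transfer the definition of the $\sigma$-dual through the isomorphism $\Lambda$ of Proposition~\ref{Lambda}(2), and then show that the resulting set of intertwiners on the Fock side is canonically identified with $\cF(\cR)$. Concretely, by the very definition of $\sigma$-dual (with $M = B(H)$, $F = E_{K_c^R}$, and $\sigma$ the identity representation on $H$), an element of $E_{K_c^R}^\sigma$ is a bounded operator $Y : H \to E_{K_c^R} \otimes_{\sigma} H$ with $Yb = (\varphi_{E_{K_c^R}}(b)\otimes I_H) Y$ for every $b \in B(H)$, and the inner product is $\langle Y_1,Y_2\rangle = Y_1^*Y_2 \in \sigma(B(H))' = \bC I_H$.

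First I would compose with $\Lambda$: by Proposition~\ref{Lambda}(2b), $\Lambda(\varphi_{E_{K_c^R}}(b)\otimes I_H) = (I_{\cF(\cR)}\otimes b)\Lambda$, so $Y \mapsto \Lambda Y$ is a unitary bijection from $E_{K_c^R}^\sigma$ onto
$$\mathcal{Z} := \{Z : H \to \cF(\cR) \otimes H \;:\; Zb = (I_{\cF(\cR)}\otimes b) Z,\; b\in B(H)\},$$
preserving the inner product $Y_1^*Y_2 = (\Lambda Y_1)^*(\Lambda Y_2)$. It therefore suffices to produce a Hilbert-space isomorphism $\cF(\cR) \cong \mathcal{Z}$.

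Next I would define $U : \cF(\cR) \to \mathcal{Z}$ by $U(\xi) = Z_\xi$, where $Z_\xi(h) = \xi \otimes h$. A direct check shows $Z_\xi b = (I\otimes b)Z_\xi$, and for $\xi_1,\xi_2\in\cF(\cR)$ and $h,h'\in H$,
$$\langle Z_{\xi_1} h', Z_{\xi_2} h\rangle_{\cF(\cR)\otimes H} = \langle \xi_1,\xi_2\rangle \langle h',h\rangle,$$
so $Z_{\xi_1}^* Z_{\xi_2} = \langle \xi_1,\xi_2\rangle I_H$. Identifying $\bC I_H$ with $\bC$ yields $\langle U(\xi_1),U(\xi_2)\rangle = \langle \xi_1,\xi_2\rangle$, so $U$ is an isometry.

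The only real content is surjectivity of $U$, and this is where I expect the (very mild) obstacle: one must show that every intertwiner $Z \in \mathcal{Z}$ arises from a single vector $\xi \in \cF(\cR)$. Fix a unit vector $h_0 \in H$; for $h \in H$ let $v_h \in B(H)$ be the rank-one operator $v_h(g) = \langle h_0, g\rangle h$, so that $v_h(h_0) = h$. Writing $Z(h_0) = \sum_n \xi_n \otimes g_n$ for some orthonormal expansion in $\cF(\cR)\otimes H$, the intertwining relation gives
$$Z(h) = Z v_h (h_0) = (I_{\cF(\cR)}\otimes v_h) Z(h_0) = \Big(\sum_n \langle h_0,g_n\rangle \xi_n\Big) \otimes h,$$
so $Z = Z_\xi$ with $\xi = \sum_n \langle h_0,g_n\rangle \xi_n \in \cF(\cR)$ (convergence of this sum being automatic since $\|Z(h_0)\| < \infty$). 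Thus $U$ is surjective, hence a Hilbert-space isomorphism, and $\cF(\cR) \cong \mathcal{Z} \cong E_{K_c^R}^\sigma$ as Hilbert spaces.
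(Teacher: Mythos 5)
Your proposal is correct and follows essentially the same route as the paper: both use $\Lambda$ from Proposition~\ref{Lambda}(2) to identify $E_{K_c^R}^{\sigma}$ with the intertwiners $H\to\cF(\cR)\otimes H$, send $\xi$ to $h\mapsto\xi\otimes h$, and prove surjectivity by fixing a unit vector $h_0$ and applying rank-one operators through the intertwining relation. The only cosmetic difference is that the paper first uses the projection onto $\bC h_0$ to collapse $\eta h_0$ to an elementary tensor before invoking the rank-one operator, whereas you apply the rank-one operator directly to the orthonormal expansion; both are valid.
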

\begin{proof}
	We define the map $\Psi:\cF(\cR) \rightarrow E_{K_c^R}^{\sigma}$ by
	$$\Psi (\xi)=\Lambda^{-1}\circ L_{\xi}\;\;\;,\; \xi\in \cF(\cR)$$ where $L_{\xi}:H\rightarrow \cF(\cR)\otimes H$ is defined by $L_{\xi}h=\xi \otimes h\in \cF(\cR)\otimes H$ and $\Lambda$ is the map of Proposition~\ref{Lambda}.
	
	To prove that $\Psi(\xi)$ (for $\xi\in \cF(\cR)$) lies in $E_{K_c^R}^{\sigma}$, fix $h\in H$ and $b\in B(H)$ and compute
	$$\Psi(\xi)\sigma(b)h= \Lambda^{-1}\circ L_{\xi}bh=\Lambda^{-1}(\xi \otimes bh)=\Lambda^{-1}(I_{\cF(\cR)}\otimes b)(\xi\otimes h)$$ and, using Proposition~\ref{Lambda}(2b.), this is equal to $$( \varphi_{E_{K_c^R}}(b)\otimes I_H)\Lambda^{-1}(\xi \otimes h)=( \varphi_{E_{K_c^R}}(b)\otimes I_H)\Psi(\xi)h$$ proving that $\Psi(\xi)\in E_{K_c^R}^{\sigma}$.
	
	Linearity of $\Psi$ is obvious.
	
	To show that $\Psi$ is surjective, fix $Y\in E_{K_c^R}^{\sigma}$ and write $\eta=\Lambda \circ Y:H \rightarrow \cF(\cR)\otimes H$. For $h\in H$ and $b\in B(H)$ we get (using Proposition~\ref{Lambda} (2b.)) that $\eta bh=(\Lambda \circ Y)bh=\Lambda (\varphi_{E_{K^R_c}}(b)\otimes I_H)Yh=(I_{\cF(\cR)}\otimes b)\eta h$. Thus, for $b\in B(H)$,
	\be\label{eta}
	\eta bh=(I_{\cF(\cR)}\otimes b)\eta h.
	\ee
	Now fix a unit vector $h_0\in H$ and write $\eta h_0=\Sigma_i \xi_i\otimes h_i$ (for $\xi_i\in \cF(\cR)$ and $h_i\in H$). Write $p_0$ for the projection onto $\bC h_0$ and use (\ref{eta}) to get $\eta h_0=\eta p_0h_0=\Sigma_i \xi_i \otimes p_0h_i$. Thus, we can write $\eta h_0=\xi \otimes h_0$ for some $\xi \in \cF(\cR)$. Now apply (\ref{eta}) again, with arbitrary $h\in H$ and $v$ which is the rank one operator mapping $h_0$ to $h$, to get $$\eta h=\eta vh_0=(I_{\cF(\cR)}\otimes v)\eta h_0=\xi \otimes vh_0=\xi \otimes h$$ proving that $\eta=L_{\xi}$ and $\Psi(\xi)=Y$.
	
	To complete the proof we need to show that $\Psi$ is an isometry and this follows from the fact that $\Lambda^{-1}$ is an isometry and from $\langle L_{\xi_1},L_{\xi_2}\rangle = L_{\xi_1}^*L_{\xi_2}=\langle \xi_1, \xi_2 \rangle$.
\end{proof}


The following two theorems present our version of the Beurling-Lax-Halmos Theorem for reproducing kernel correspondences.

\begin{theorem}\label{BLH}
Suppose $E_K$ is an $(X,H)$-contractive reproducing kernel correspondence, $G$ is a Hilbert space and $\cS\subseteq E_{K}\otimes_{B(H)}H\otimes G$ is a subspace. Then
 \begin{enumerate}
 \item[(1)] $\cS$  is invariant under $M_S \otimes I_H\otimes I_G$ if and only if there is a Hilbert space $\cD$ and a partial isometry $\Pi:\cF(\cR)\otimes \cD \rightarrow E_{K}\otimes_{B(H)}H\otimes G$ such that, for every $i$,
     $$(M_{S_i}\otimes I_{H\otimes G})\Pi=\Pi (W_i\otimes I_{\cD}) $$ and
     $$\cS=\Pi(\cF(\cR) \otimes \cD).$$
 \item[(2)] Write $\cD_0=H\otimes \cD$  and let $B(H)$ act on $\cD$ in the obvious way. Then $\cS$  is invariant under $M_S \otimes I_H\otimes I_G$ if and only if there is a partial isometry $Y: E_{K^R_c}\otimes_{B(H)}\cD_0 \rightarrow E_{K}\otimes_{B(H)}H\otimes G$ such that, for every $i$,
     $$(M_{S_i}\otimes I_{H\otimes G})Y=Y (M_{S_i}\otimes I_{\cD_0}) $$ and
     $$\cS=Y(E_{K^R_c}\otimes_{B(H)}\cD_0 ).$$
     \end{enumerate}
\end{theorem}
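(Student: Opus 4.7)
The plan is to derive both parts from Theorem~\ref{Tinvariant} and Proposition~\ref{Lambda}, since the ``if'' directions in (1) and (2) are immediate: the range of any $\Pi$ (resp.\ $Y$) intertwining the $W_i$'s (resp.\ the $M_{S_i}$'s on $E_{K^R_c}\otimes_{B(H)}\cD_0$) with $M_{S_i}\otimes I_{H\otimes G}$ is automatically invariant under $M_S\otimes I_{H\otimes G}$.

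For part (1), I would view $T:=M_S\otimes I_H\otimes I_G$ as a commuting $d$-tuple on the Hilbert space $E_K\otimes_{B(H)}H\otimes G$ and apply Theorem~\ref{Tinvariant}. The preceding (unlabeled) proposition gives $M_S\otimes_{B(H)}I_H\in\overline{D}_{cp}(X,E_K\otimes_{B(H)}H)$ since $E_K$ is $(X,H)$-contractive; tensoring by $I_G$ preserves membership in $\overline{D}_{cp}$: commutativity is clear, the identity $\Phi_{A\otimes I_G}(a\otimes I_G)=\Phi_A(a)\otimes I_G$ (which follows because $T_\alpha=A_\alpha\otimes I_G$) yields $\Phi_T(I)\le I$, and the same identity applied inductively gives $\Phi_T^n(I)=\Phi_{M_S\otimes I_H}^n(I)\otimes I_G\searrow 0$. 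Theorem~\ref{Tinvariant} applied to $T$ and the invariant subspace $\cS$ then produces a Hilbert space $\cD$ and a partial isometry $\Pi:\cF(\cR)\otimes \cD\to E_K\otimes_{B(H)}H\otimes G$ with range $\cS$ and intertwining $T_i\Pi=\Pi(W_i\otimes I_\cD)$, which is exactly (1).

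For part (2), I would deduce the statement from (1) using the Hilbert-space isomorphism $\Lambda:E_{K^R_c}\otimes_{B(H)}H\to \cF(\cR)\otimes H$ of Proposition~\ref{Lambda}(2), which intertwines $M_{S_i}\otimes I_H$ with $W_i\otimes I_H$. Fix a unit vector $h_0\in H$ and embed $\cF(\cR)\otimes\cD\hookrightarrow\cF(\cR)\otimes H\otimes\cD$ via $\xi\otimes d\mapsto \xi\otimes h_0\otimes d$; extend the $\Pi$ from (1) to $\tilde\Pi:\cF(\cR)\otimes H\otimes\cD\to E_K\otimes_{B(H)}H\otimes G$ by declaring $\tilde\Pi=0$ on $\cF(\cR)\otimes(H\ominus\bC h_0)\otimes\cD$. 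Because $W_i$ acts only on $\cF(\cR)$, both the subspace and its complement are invariant under $W_i\otimes I_{H\otimes\cD}$, so $\tilde\Pi$ is still a partial isometry with range $\cS$ and still satisfies $(M_{S_i}\otimes I_{H\otimes G})\tilde\Pi=\tilde\Pi(W_i\otimes I_{H\otimes\cD})$. Setting $\cD_0:=H\otimes\cD$ and using the natural identification $E_{K^R_c}\otimes_{B(H)}\cD_0\cong(E_{K^R_c}\otimes_{B(H)}H)\otimes\cD$, the composition $Y:=\tilde\Pi\circ(\Lambda\otimes I_\cD)$ is then the desired partial isometry, and Proposition~\ref{Lambda}(2a) converts the $W_i$-intertwining into an $M_{S_i}$-intertwining.

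The main obstacle I anticipate is the bookkeeping for the passage between the Hilbert-space tensor product $\cF(\cR)\otimes\cD$ and the correspondence tensor product $E_{K^R_c}\otimes_{B(H)}(H\otimes\cD)$: the map $\Lambda$ naturally produces a factor of $H$ that is not present on the ``free'' side of part (1), which is why one must enlarge $\cD$ to $H\otimes\cD$ and then carefully verify that the zero-extension across $(H\ominus\bC h_0)\otimes\cD$ preserves the intertwining. Once this reduction is set up correctly, the rest reduces to invoking Theorem~\ref{Tinvariant} and Proposition~\ref{Lambda} as above.
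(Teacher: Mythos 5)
Your proposal is correct and follows essentially the same route as the paper: part (1) is Theorem~\ref{Tinvariant} applied to $M_S\otimes I_{H\otimes G}$ (the paper likewise just notes that membership in $\overline{D}_{cp}$ passes from $E_K\otimes_{B(H)}H$ to $E_K\otimes_{B(H)}H\otimes G$, which you spell out), and part (2) uses exactly the paper's device of fixing a unit vector $h_0$, extending $\Pi$ to $\cF(\cR)\otimes H\otimes\cD$ by annihilating $\cF(\cR)\otimes(H\ominus\bC h_0)\otimes\cD$, and composing with $\Lambda\otimes I_{\cD}$. No gaps; the identification $E_{K^R_c}\otimes_{B(H)}(H\otimes\cD)\cong(E_{K^R_c}\otimes_{B(H)}H)\otimes\cD$ you flag is the intended reading of the theorem's statement.
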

\begin{proof}
In both (1) and (2) the condition is clearly sufficient for $\cS$ to be an invariant subspace. So we attend to the other direction.

For (1), since we assume that $M_S \otimes_{B(H)}I_H \in \overline{D}_{cp}(X,E_K\otimes_{B(H)}H)$ (and, therefore also $M_S \otimes_{B(H)}I_{H\otimes G} \in \overline{D}_{cp}(X,E_K\otimes_{B(H)}H\otimes G)$), we can use Theorem~\ref{Tinvariant}.

For (2) Write $\cD_0:=H\otimes \cD$ (for $\cD$ of part (1)). Fix a unit vector $h_0\in H$ and define $\Pi_0:\cF(\cR)\otimes \cD_0=\cF(\cR)\otimes H\otimes \cD \rightarrow E_K\otimes H\otimes G$ by
$$\Pi_0(\xi \otimes h \otimes d)=\langle h,h_0 \rangle \Pi(\xi \otimes d) .$$
Clearly $\Pi_0$ is a well defined partial isometry with $\cS=\Pi_0(\cF(\cR)\otimes \cD_0)$ satisfying
$$(M_{S_i}\otimes I_{H\otimes G})\Pi_0=\Pi_0 (W_i\otimes I_{\cD_0}). $$
Now, we set $Y=\Pi_0 (\Lambda \otimes I_{\cD})$. Then $(M_{S_i}\otimes I_{H\otimes G})Y=(M_{S_i}\otimes I_{H\otimes G})\Pi_0(\Lambda \otimes I_{\cD})=\Pi_0 (W_i\otimes I_{\cD_0})(\Lambda \otimes I_{\cD})=\Pi_0 (\Lambda \otimes I_{\cD})(M_{S_i}\otimes I_{\cD_0})=Y(M_{S_i}\otimes I_{\cD_0})$.

\end{proof}

\begin{definition}
Suppose $K_1,K_2$ are $\bC$-valued positive kernels on a set $\Sigma$. Then the elements of the reproducing kernel Hilbert space $H_{K_i}$ are functions on $\Sigma$ and, if $G_1,G_2$ are Hilbert spaces we can view the elements of $H_{K_i}\otimes G_i$ as functions from $\Sigma$ to $G_i$ in a natural way.
An operator-valued map $\Theta: \Sigma \rightarrow B(G_1,G_2)$ is said to be a $K_1-K_2$-multiplier if, for every $f\in H_{K_1}\otimes G_1$ (viewed as a function from $\Sigma$ to $G_1$), the function $\Theta f$ lies in $H_{K_2}\otimes G_2$. In such a case, we write $M_{\Theta}$ for the map that sends $f$ to $\Theta f$ and write $\Theta\in \mathcal{M}(H_{K_1}\otimes G_1,H_{K_2}\otimes G_2)$.

A multiplier $\Theta$ is said to be partially isometric if $M_{\Theta}$ is.

\end{definition}

Now note what we get for the case $H=\bC$. In this case $D(X,\bC)=D_c(X,\bC)=\{z=(z_1,\cdots,z_d)\in \bC^d :\; \Sigma_{|\alpha|=|\beta|}z_{\alpha}\overline{z_{\beta}} x_{\alpha,\beta}<1\;\}$.
The kernel $K^R_c$ can be viewed as a $\bC$-valued kernel and, as we saw in Proposition~\ref{Kzw}, $E_{K^R_c}$ is the weighted Hilbert space $\cF(\cR)=\Sigma R_k(\bC^d)^{\circledS k}$. In this case, $K$ in the statement of the theorem is a $\bC$-valued kernel defined on $D_c(X,\bC)\times D_c(X,\bC)$ and giving rise to a RKHS $E_K$ such that \be\label{Xcontractive}
\Sigma_{k=1}^{\infty} M_z^{(k)}(X_k\otimes I_{E_K})M_z^{(k) *}=\Sigma_{|\alpha|=|\beta|}x_{\alpha,\beta} M_{z_{\alpha}}M^*_{z_{\beta}} \leq I.
\ee
Thus, we get
\begin{theorem}\label{BLHC}
Suppose $K:D(X,\bC) \times D(X,\bC) \rightarrow \bC$ is a positive definite kernel where $D(X,\bC) =\{z=(z_1,\cdots,z_d)\in \bC^d :\; \Sigma_{|\alpha|=|\beta|}z_{\alpha} \overline{z_{\beta}} x_{\alpha,\beta}<1\;\}$. Also assume that $M_z$ is a bounded multiplier on the RKHS $E_K$ and it satisfies (\ref{Xcontractive}). Then, given a Hilbert space $G$
 and a subspace $\cS\subseteq E_{K}\otimes G$,
 \begin{enumerate}
 \item[(1)] $\cS$  is invariant under $M_z \otimes I_G$ if and only if there is a Hilbert space $\cD$ and a partially isometric map $\Pi:\cF(\cR) \otimes \cD \rightarrow E_K \otimes G$ such that for every $i$,
     $$(M_{z_i}\otimes I_{ G})\Pi=\Pi (W_i\otimes I_{\cD}) $$ and $$\cS=\Pi(\cF(\cR)\otimes \cD).$$
 \item[(2)] $\cS$  is invariant under $M_z \otimes I_G$ if and only if there is a partially isometric multiplier $\Theta \in \cM(E_{K^R_c}\otimes \cD, E_K\otimes G)$ such that $$\cS=M_{\Theta}(E_{K^R_c}\otimes \cD).$$
 \item[(3)] When $K=K_c^R$ (viewed as a $\bC$-valued kernel), $\cS\subseteq \cF(\cR)\otimes G$  is invariant under $M_z \otimes I_G$ if and only if there is a partially isometric multiplier $\Theta \in \cM(\cF(\cR)\otimes \cD, \cF(\cR)\otimes G)$ such that $$\cS=M_{\Theta}(\cF(\cR)\otimes \cD).$$
     \end{enumerate}
\end{theorem}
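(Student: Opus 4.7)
The plan is to deduce all three parts of Theorem~\ref{BLHC} from Theorem~\ref{BLH}, specialized to $H=\bC$; part (3) will be the special case $K=K^R_c$ of part (2). The ``if'' directions in each part are immediate: a multiplier $\Theta$ gives $(M_{z_i}\otimes I_G)M_\Theta=M_\Theta(M_{z_i}\otimes I_\cD)$, so its image is $M_z\otimes I_G$-invariant, and likewise any $\Pi$ intertwining the $W_i$ with the $M_{z_i}$ has invariant range.

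For the ``only if'' direction of part (1), with $H=\bC$ one has $B(H)=\bC$ and $E_K\otimes_{B(H)}H\otimes G\cong E_K\otimes G$, so Theorem~\ref{BLH}(1) yields the required $\Pi$ after noting that $(X,\bC)$-contractivity is exactly (\ref{Xcontractive}). For part (2), apply Theorem~\ref{BLH}(2) with $H=\bC$ and $\cD_0=\cD$ to obtain a partial isometry $Y:E_{K^R_c}\otimes\cD\to E_K\otimes G$ that intertwines the coordinate multiplications and has $\cS=Y(E_{K^R_c}\otimes\cD)$. The remaining task, and the main obstacle, is to show that $Y$ is implemented by an operator-valued multiplier $\Theta:D(X,\bC)\to B(\cD,G)$. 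For this, pass to adjoints: $Y^*(M_{z_i}^*\otimes I_G)=(M_{z_i}^*\otimes I_\cD)Y^*$. Since each $z_i$ is a multiplier of $E_K$, the action on kernel functions gives $M_{z_i}^*k_w^K=\overline{w}_ik_w^K$. Thus, for every $g\in G$, $Y^*(k_w^K\otimes g)$ lies in $\bigcap_i\ker(M_{z_i}^*\otimes I_\cD-\overline{w}_i)$, and Lemma~\ref{Mzi}(2) identifies this joint eigenspace in $E_{K^R_c}\cong\cF(\cR)$ as the one-dimensional space $\bC k_w^R$; tensoring with $\cD$, it becomes $k_w^R\otimes\cD$. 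Hence there is a linear map $\Theta(w)^*:G\to\cD$ with
\[
Y^*(k_w^K\otimes g)=k_w^R\otimes\Theta(w)^*g,
\]
which defines $\Theta:D(X,\bC)\to B(\cD,G)$. A routine computation of $\langle k_w^K\otimes g,Yf\rangle=\langle Y^*(k_w^K\otimes g),f\rangle$ via the reproducing property on both sides yields $(Yf)(w)=\Theta(w)f(w)$, so $\Theta\in\cM(E_{K^R_c}\otimes\cD,E_K\otimes G)$ with $M_\Theta=Y$, and $\Theta$ is partially isometric because $Y$ is.

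Part (3) is the case $K=K^R_c$ of part (2). The required hypotheses on $K$ (that $M_z$ is a bounded multiplier satisfying (\ref{Xcontractive})) are supplied by Proposition~\ref{Lambda}(1) and (3) with $H=\bC$. The hard content of the whole argument is thus the upgrade of the intertwining partial isometry $Y$ of Theorem~\ref{BLH}(2) into a multiplication operator; the rest amounts to specialization and a direct verification on kernel functions, with Lemma~\ref{Mzi}(2) providing the crucial geometric fact that pins $\Theta$ down uniquely.
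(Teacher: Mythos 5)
Your proposal is correct and follows essentially the same route as the paper: reduce to Theorem~\ref{BLH} with $H=\bC$, then use the intertwining relation together with Lemma~\ref{Mzi}(2) to produce $\Theta(w)$ from $Y^*(k_w^K\otimes g)=k_w^R\otimes\Theta(w)^*g$. The only (harmless) difference is the last step: you verify directly via the reproducing property that $(Yf)(w)=\Theta(w)f(w)$, whereas the paper instead shows the kernel $K(z,w)I_G-\Theta(z)K^R_c(z,w)\Theta(w)^*$ equals $\langle k_w\otimes g,(I-YY^*)(k_z\otimes h)\rangle$ and is therefore positive, invoking the multiplier characterization.
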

\begin{proof}
Everything follows from Theorem~\ref{BLH} except for the fact that the map $Y$ in part (2) is $M_{\Theta}$ for some multiplier $\Theta \in \cM(E_{K^R_c}\otimes \cD, E_K\otimes G)$. Applying Theorem~\ref{BLH}(2), we get a partial isometry $Y: E_{K^R_c}\otimes\cD \rightarrow E_{K}\otimes G$ such that, for every $i$,
     $$(M_{z_i}\otimes I_{ G})Y=Y (M_{z_i}\otimes I_{\cD}) $$ and
     $$\cS=Y(E_{K^R_c}\otimes \cD ).$$
To prove that $Y=M_{\Theta}$ we first follow the proof of \cite[Lemma 2.2]{S2015} (using also Lemma~\ref{Mzi}) to show that for every $w\in D(X,\bC)$ there is a map $\Theta(w) \in B(\cD,G)$ such that, for every $g\in G$,
\be
Y^*(k_w \otimes g)=k^R_w \otimes \Theta(w)^*g
\ee
where $k_w, k^R_w$ are the reproducing functions of $K, K^R_c$ respectively.
Write $A(z,w)=K(z,w)I_G-\Theta(z)K^R_c(z,w)\Theta(w)^*\in B(G)$ and compute,
$$A(z,w)=\langle k_z,k_w\rangle I_G-\Theta(z)\langle k^R_z,k^R_w\rangle \Theta(w)^*.$$ Thus,
 for $g,h\in G$,
$$\langle A(z,w)g,h\rangle=\langle k_w,k_z\rangle \langle g,h \rangle-\langle k^R_w \otimes \Theta(w)^*g,k^R_z \otimes \Theta(z)^*h\rangle  =$$  $$ \langle k_w,k_z\rangle \langle g,h \rangle-\langle Y^*(k_w\otimes g),Y^*(k_z\otimes h)\rangle =\langle k_w,k_z\rangle \langle g,h \rangle-\langle k_w\otimes g,YY^*(k_z\otimes h)\rangle=$$ $$\langle k_w\otimes g,(I-YY^*)(k_z\otimes h)\rangle.$$ It follows that $A(z,w)$ is positive definite and, therefore, $\Theta$ is a multiplier.
\end{proof}

Note that part (3) of the theorem is closely related to \cite[Theorem 3.3]{Po2010} and to \cite[Theorem 2.3]{S2015}.

Restricting to the case $R_k=I $ for all $k$ (or, equivalently, $X_1=I$ and $X_k=0$ for $k>1$) we get \cite[Theorem 6.5]{BEKS}.

In the analysis above, condition (\ref{Xcontractivecondition}) plays an important role. It generalizes the condition that $M_S$ is contractive. In order to better understand condition (\ref{Xcontractivecondition}) we shall first need the following lemma.

\begin{lemma}\label{PhiTL}
For $T,L$ in $D_c(X,H)$ the map $\Phi_{T,L}:B(H)\rightarrow B(H)$ defined by
$$\Phi_{T,L}(a)=\Sigma_{k=1}^{\infty} T^{(k)}(X_k \otimes a)L^{(k)} $$ is bounded with $||\Phi_{T,L}||<1$ and
$$(id-\Phi_{T,L})^{-1}=K^R_c(T,L) .$$
This can be viewed as the operator version of (\ref{xr}).
\end{lemma}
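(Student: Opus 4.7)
The plan is to mimic, at the operator level, the scalar identity (\ref{xr}) by first showing that $\Phi_{T,L}$ is a strict contraction on $B(H)$ and then expanding $(id-\Phi_{T,L})^{-1}$ as a Neumann series whose coefficients, after regrouping, exactly match the coefficients defining $K^R_c(T,L)$.

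For the norm estimate I would use a Cauchy--Schwarz argument on the row operators $b(T)$ and $b(L)$ from (\ref{bT}). Writing
\[
\Phi_{T,L}(a)=\sum_{k=1}^{\infty}T^{(k)}(X_k^{1/2}\otimes I)(I_{E^{\otimes k}}\otimes a)(X_k^{1/2}\otimes I)L^{(k)*},
\]
we have $\Phi_{T,L}(a)=b(T)(I_{\mathcal F(E)}\otimes a)b(L)^{*}$ (up to reindexing onto the non-zero tensor summands), so
\[
\|\Phi_{T,L}(a)\|\le \|b(T)\|\,\|b(L)\|\,\|a\|=\|\Phi_T(I)\|^{1/2}\|\Phi_L(I)\|^{1/2}\|a\|.
\]
Since $T,L\in D_c(X,H)$ satisfy $\|\Phi_T(I)\|<1$ and $\|\Phi_L(I)\|<1$, this gives $\|\Phi_{T,L}\|<1$ and convergence of the Neumann series $(id-\Phi_{T,L})^{-1}=\sum_{n\ge 0}\Phi_{T,L}^{\,n}$ in operator norm on $B(H)$.

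Next I would compute the iterates, exactly paralleling the computation of $\Phi_T^{\,n}$ done just before Lemma~\ref{bn} and the computation of $\Phi_z^{\,l}$ in Proposition~\ref{Kzw}. Using $T^{(k+m)}=T^{(k)}(I_{E^{\otimes k}}\otimes T^{(m)})$ and the analogous identity for $L$, an induction on $l$ yields
\[
\Phi_{T,L}^{\,l}(a)=\sum_{n\ge l}T^{(n)}\Bigl(\sum_{\alpha\in F(n,l)}X_{\alpha(1)}\otimes\cdots\otimes X_{\alpha(l)}\otimes a\Bigr)L^{(n)*},
\]
where $F(n,l)$ is the index set from (\ref{Rk}). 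The $l=0$ term contributes $a$ (corresponding to $R_0=I$).

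Finally I would swap the order of summation in $\sum_{l\ge 0}\Phi_{T,L}^{\,l}(a)$, which is legitimate because the series converges absolutely in norm by step one, and read off the inner sum using definition (\ref{Rk}): for each fixed $n\ge 1$,
\[
\sum_{l=1}^{n}\sum_{\alpha\in F(n,l)}X_{\alpha(1)}\otimes\cdots\otimes X_{\alpha(l)}=R_n^{2}.
\]
Hence
\[
(id-\Phi_{T,L})^{-1}(a)=a+\sum_{n=1}^{\infty}T^{(n)}(R_n^{2}\otimes a)L^{(n)*}=\sum_{n=0}^{\infty}T^{(n)}(R_n^{2}\otimes a)L^{(n)*}=K^R_c(T,L)(a),
\]
which is the desired identity. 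The only genuinely delicate point is the Cauchy--Schwarz bound, because once $\|\Phi_{T,L}\|<1$ is in hand the rest is a combinatorial bookkeeping exercise that is already done in the paper in the scalar case and for the diagonal case $T=L$; the convergence needed to swap the two summations is automatic from the Neumann series estimate.
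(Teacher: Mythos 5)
Your proof follows the paper's argument essentially verbatim: the bound $\|\Phi_{T,L}\|\le\|\Phi_T\|^{1/2}\|\Phi_L\|^{1/2}<1$ is obtained by the same Cauchy--Schwarz factorization through row operators (the paper absorbs $a$ into one of the two rows rather than sandwiching $I\otimes a$ in the middle, which is equivalent), and the identity $(id-\Phi_{T,L})^{-1}=K^R_c(T,L)$ comes from the same Neumann-series expansion and regrouping via $R_n^2=\sum_{l=1}^n\sum_{\alpha\in F(n,l)}X_{\alpha(1)}\otimes\cdots\otimes X_{\alpha(l)}$, a computation the paper delegates to \cite[p.~516]{MSWS} and you carry out explicitly. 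The one step you treat more casually than it deserves is the interchange of the sums over $l$ and $n$ (absolute convergence of the double series of individual terms does not follow immediately from $\sum_l\|\Phi_{T,L}^l(a)\|<\infty$), but this is precisely the point the paper itself leaves to the cited reference.
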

\begin{proof}
Recall (Equation (\ref{PhiT})) that $\Phi_T(a)=\Sigma_{k=1}^{\infty}T^{(k)}(X_k\otimes a)T^{(k)*}$. This is a completely positive map on $B(H)$ and, thus, $||\Phi_T||=||\Phi_T(I)||$.

Fix $a\in B(H)$ and consider the rows of operators $A:=(T^{(k)}(X_k^{1/2}\otimes a))_{k=1}^{\infty}$ and $B:=(L^{(k)}(X_k^{1/2}\otimes I_H))_{k=1}^{\infty}$. Then $AA^*=\Phi_T(aa^*)$, $BB^*=\Phi_L(I)$ and $AB^*=\Phi_{T,L}(a)$. Thus
$$||\Phi_{T,L}(a)||\leq ||A||||B||\leq ||\Phi_T||^{1/2}||a||||\Phi_L||^{1/2}.$$
Since $T$ and $L$ lie in $D_c(X,H)$, $||\Phi_T||<1$ and also $||\Phi_L||<1$ and it follows that $||\Phi_{T,L}||<1$. Therefore $(id-\Phi_{T,L})^{-1}$ is a well defined map on $B(H)$ that is equal to $\Sigma_{k\geq 0}\Phi_{T,L}^k$.
A computation very much like in \cite[page 516]{MSWS} shows that
$$(id-\Phi_{T,L})^{-1}=K^R_c(T,L) .$$
(There it is done for $T=L$).
\end{proof}

\begin{theorem}\label{condK}
For a kernel $K:D_c(X,H)\times D_c(X,H)\rightarrow B_*(B(H),B(H))$ and $\Phi_{T,L}$ as in Lemma~\ref{PhiTL},
\begin{enumerate}
\item[(1)] Condition (\ref{Xcontractivecondition}) is equivalent to the condition that
$(id - \Phi_{T,L})\circ K(T,L) $ is a cp kernel.
\item[(2)]  Condition (\ref{Xcontractivecondition}) is equivalent to the condition that
$(K^R_c(T,L))^{-1}\circ K(T,L) $ is a cp kernel.
\end{enumerate}
\end{theorem}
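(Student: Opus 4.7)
The plan is to reduce condition (\ref{Xcontractivecondition}) to a positivity condition on $B(H)$-valued Gram matrices formed from kernel functions, and then to recognize that condition as cp-ness of the kernel in question. First, since $\Phi_{M_S\otimes I_H}(I_{E_K\otimes H}) = \Phi_{M_S}(I_{E_K})\otimes I_H$, the hypothesis $M_S\otimes I_H\in \overline{D}_{cp}(X,E_K\otimes_{B(H)}H)$ is equivalent to the operator inequality $\Phi_{M_S}(I_{E_K})\leq I_{E_K}$ in $\mathcal{L}(E_K)$ (the purity clause holds automatically once this inequality does, by Proposition~\ref{Lambda}).

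For the key calculation, expand $\Phi_{M_S}(I_{E_K}) = \sum_{k,\alpha,\beta} x_{\alpha,\beta}\, M_{S_\alpha} M_{S_\beta}^*$, and use Theorem~\ref{charmultiplier}(2) together with $S_\gamma(T)=T_\gamma$ to get $M_{S_\gamma}^* k_{a,T} = k_{a,T}\cdot T_\gamma^*$ for every word $\gamma$. For kernel-function vectors $f=\sum_i k_{a_i,T_i}\cdot c_i$ and $g=\sum_j k_{a_j,T_j}\cdot c_j$ in $E_K$ (with $T_i\in D_c(X,H)$ and $a_i,c_i\in B(H)$), one computes via the defining formula $\langle k_{b,z}\cdot c,k_{a,w}\cdot d\rangle = c^*K(z,w)(ba^*)d$ that
\[
\langle f,\Phi_{M_S}(I)g\rangle \;=\; \sum_{i,j} c_i^*\,\Phi_{T_i,T_j}\bigl(K(T_i,T_j)(a_i a_j^*)\bigr)\,c_j,
\]
while $\langle f,g\rangle = \sum_{i,j} c_i^* K(T_i,T_j)(a_i a_j^*)c_j$.

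Since such vectors are dense in $E_K$, the inequality $\Phi_{M_S}(I)\leq I$ is equivalent to the requirement
\[
\sum_{i,j} c_i^*\,\bigl[(id-\Phi_{T_i,T_j})\circ K(T_i,T_j)\bigr](a_i a_j^*)\,c_j \;\geq\; 0
\]
holding in $B(H)$ for every finite family $\{(a_i,c_i,T_i)\}$. This is precisely the Kolmogorov-type positivity that characterizes cp-ness of the $B_*(B(H),B(H))$-valued kernel $(T,L)\mapsto (id-\Phi_{T,L})\circ K(T,L)$ (equivalently, the existence of a reproducing $W^*$-correspondence for this kernel), so (1) follows. For part (2), Lemma~\ref{PhiTL} yields $(id-\Phi_{T,L})^{-1} = K^R_c(T,L)$, whence $(id-\Phi_{T,L})\circ K(T,L) = K^R_c(T,L)^{-1}\circ K(T,L)$, and (2) is just a rewriting of (1).

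The principal care needed is in the matrix-element computation: with the convention that $\langle\cdot,\cdot\rangle$ is linear in the second variable and that right $B(H)$-action passes through the inner product as $c^*(\cdot)d$, one must track carefully that the scalar coefficients $x_{\alpha,\beta}$ reassemble into $\Phi_{T_i,T_j}$ in the form defined in Lemma~\ref{PhiTL}, rather than into a transposed variant; the self-adjointness of $\Phi_{M_S}(I)$ provides a useful consistency check here. Everything else is a routine application of the identification between positivity of a Gram matrix of the above form and cp-ness of the associated $B_*(B(H),B(H))$-valued kernel.
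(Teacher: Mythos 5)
Your proposal is correct and follows essentially the same route as the paper: both reduce condition (\ref{Xcontractivecondition}) to $I-\Sigma x_{\alpha,\beta}M_{S_{\alpha}}M_{S_{\beta}}^*\geq 0$ via $M_{S_\gamma}^*k_{a,T}=k_{a,T}\cdot T_\gamma^*$ and the reproducing inner-product formula, identify the resulting Gram positivity with cp-ness of $(id-\Phi_{T,L})\circ K(T,L)$, and derive (2) from Lemma~\ref{PhiTL}. The only slip is attributing the automatic-purity step to Proposition~\ref{Lambda} rather than to the (unlabeled) proposition following Definition~\ref{XH}, which is immaterial.
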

\begin{proof}
Once we prove part (1), part (2) will follow from Lemma~\ref{PhiTL}. So it suffices to prove part (1).

$(id - \Phi_{T,L})\circ K(T,L) $ is a cp kernel means that, for every $T^1,\ldots,T^k,L^1,\ldots,L^k$ in $D_c(X,H)$ and every $a_1,\ldots,a_k$ in $B(H)$, the matrix (with entries in $B(H)$) defined by
$$((id - \Phi_{T^i,L^j})\circ K(T^i,L^j)(a_ia_j^*))_{i,j=1}^k $$
is positive.
Since
\be
 K(T^i,L^j)(a_ia_j^*)=\langle k_{a_i,T^i},k_{a_j,L^j} \rangle
\ee
we have
$$\Phi_{T^i,L^j}( K(T^i,L^j)(a_ia_j^*))=\Sigma_{|\alpha|=|\beta|} x_{\alpha,\beta}T^i_{\alpha}\langle k_{a_i,T^i},k_{a_j,L^j} \rangle L^{j *}_{\beta}=$$  $$\Sigma x_{\alpha,\beta}\langle k_{a_i,T^i}\cdot T^{i *}_{\alpha},k_{a_j,L^j}\cdot L^{j *}_{\beta} \rangle = \Sigma x_{\alpha,\beta}\langle M_{S_{\alpha}}^*k_{a_i,T^i},M_{S_{\beta}}^*k_{a_j,L^j}\rangle=$$  $$\langle k_{a_i,T^i},\Sigma x_{\alpha,\beta}M_{S_{\alpha}}M_{S_{\beta}}^*k_{a_j,L^j} \rangle $$ and $$(id - \Phi_{T^i,L^j})\circ K(T^i,L^j)(a_ia_j^*)=\langle k_{a_i,T^i},(I-\Sigma x_{\alpha,\beta}M_{S_{\alpha}}M_{S_{\beta}}^*)k_{a_j,L^j}\rangle.$$
Thus, the positivity of $(id - \Phi_{T,L})\circ K(T,L) $  is equivalent to $I-\Sigma x_{\alpha,\beta}M_{S_{\alpha}}M_{S_{\beta}}^*\geq 0$ which is (\ref{Xcontractivecondition}).

\end{proof}

\begin{example}
Suppose $\{B^2_k\}_{k=0}^{\infty}$ is a sequence of positive operators, $B_k\in B((\bC^d)^{\otimes k})$, and the kernel $K^B(T,L)(a):=\Sigma_{k=0}^{\infty} T^{(k)}(B_k^2\otimes a)L^{(k) *}$ is well defined on $D_c(X,H)$ (where $a\in B(H)$). Define $$C_k^2=\Sigma_{m=0}^{k} R_m^2 \otimes B_{k-m}^2 \in B((\bC^d)^{\otimes k})$$ and write
$$K^C(T,L)(a):=\Sigma_{k=0}^{\infty} T^{(k)}(C_k^2\otimes a)L^{(k) *}.$$
Then a straightforward computation shows that
$$K^C(T,L)=K^R_c(T,L)\circ K^B(T,L) .$$ Thus, using Theorem~\ref{condK} (2), $K^C$ satisfies condition (\ref{Xcontractivecondition}) and $E_K$ is an $(X,H)$-contractive reproducing kernel correspondence.

In particular, this holds when $B_k=R_k$ so that
$$C_k^2=\Sigma_{m=0}^{\infty} R_m^2 \otimes R^2_{k-m} .$$ For example, if $R_k=I$ for all $k$, we get $C_k^2=(k+1)I$ and $E_{K^C}$ can be viewed as a generalization of the Bergman space.

\end{example}


\end{document}